\def\referencia#1#2{\begingroup
#2%
\def\@currentlabel{#2}%
\phantomsection\label{#1}\endgroup
}
\theoremstyle{plain}
\newtheorem{teo}{\textsc{Theorem}}[section]
\newtheorem*{mnteo}{\textsc{Main Theorem}}
\newtheorem*{question}{\textsc{Question}}
\newtheorem{quesnum}{\textsc{Question}}
\newtheorem*{ques2rev}{\textsc{Question 2'}}
\newtheorem{prop}[teo]{Proposition}
\newtheorem{lema}[teo]{Lemma}
\newtheorem{coro}{\textbf{Corollary}}
\theoremstyle{definition}
\newtheorem{defi}[teo]{Definition}
\theoremstyle{remark}
\newtheorem{obs}[teo]{\textit{Remark}}
\newtheorem{claim}{Claim}
\newtheorem*{rem}    {Remark}
\newcommand{\N}     {\mathbb{N}}
\newcommand{\Z}     {\mathbb{Z}}
\newcommand{\R}     {\mathbb{R}}
\renewcommand{\P}   {\mathbb{P}}
\newcommand{\G} {\mathbb{G}}
\newcommand{\reg}   {\mathcal{R}}
\newcommand{\per}   {\mathcal{P}{\hspace{-2pt}}\text{\textit{er}}}
\newcommand{\e}     {\varepsilon}
\newcommand{\supp}  {\text{\textup{supp}}\, }
\renewcommand{\exp} {\text{\textup{exp}}}
\newcommand{\gld} {\text{\textup{GL}} (d,\mathbb{R})}
\newcommand{\sld} {\ensuremath{\text{\textup{SL}} (d,\mathbb{R})}}
\newcommand\restrict[1]{{|}_{#1}}
\newcommand{\inte}[1]{\ensuremath{\text{\textup{int}}({#1})}}
\newcommand{\Hom}[2][\empty]{\ensuremath{\text{\textup{Hom}}_{#1}({#2})}}
\newcommand{\Diff}[2][\empty]{\ensuremath{\text{\textup{Diff}}_{#1}^1({#2})}}
\title[Uniform Gap in Lyapunov Exponents and Dominated...]{Uniform gap in Lyapunov Exponents and Dominated Splitting for Linear Cocycles}
\author{Bruno Yemini}
\address{Instituto de Matemática, Pontificia Universidad Católica de Valparaíso, Blanco Viel 596, Cerro Bar\'on, Valpara\'{\i}so-Chile.}
\email{bruno.yemini.c@mail.pucv.cl}
\thanks{The author was supported by PUCV Postgraduate Scholarship Program 2017 (Be\-ne\-fi\-cia\-rio Be\-ca 
Post\-gra\-do PUCV 2017). The author was partially supported by Proyecto FONDECYT Regular 1171477 and Proyecto FONDECYT Regular 1210168. }
\subjclass{Primary:37H15, 37D25, 37D30.}
\keywords{Dominated splitting, Linear Cocycles, Lyapunov exponents, Non-uniform hyperbolicity}
\pgfplotsset{compat=1.15}
\begin{document}

\begin{abstract}
Given a linear cocycle over an ergodic homeomorphism on a compact metric space, we show that the existence of a uniform gap between the \(p-\)th and \((p+1)-\)th Lyapunov Exponent on a \(C^0-\)neighbourhood implies the existence of a Dominated splitting of index \(p\).
\end{abstract}

\maketitle

\tableofcontents

\section{Introduction}
The theory of characteristic exponents was founded by the end of 19th century with the study of stability of ordinary differencial equations by Lyapunov~\cite{Lyapunov1892}. It was the works of Furstenberg and Kesten~\cite{FurstenbergKesten1960} and Oseledets~\cite{Oseledets68} who brought the concept into the field of Ergodic Theory.  Lyapunov Exponents appear naturally on the field of Smooth Dynamical Systems by the works of Pesin~\cite{Pesin77} and Spectral Theory~\cite{Damanik2007}.

Let \(X\) be a compact metric space and \(f \in \Hom{X}\). Let \(A:X \rightarrow \mathcal{G}\) be a continous function where \(\mathcal{G}\) is a group of matrices, either \(\gld\) or \(\sld\). A linear cocycle is a map \(F_A:X\times \R^d \rightarrow X\times \R^d\) given by \((x,v) \mapsto (f(x),A(x)v)\). The Lyapunov exponent related to \(x\) and \(v \neq 0\) is given by the limit
\[
\lim_n \frac{1}{n}\|A^n(x)v\| = \gamma(x,v).
\]

One of the main properties of Lyapunov exponents for linear cocycles is the existence of an invariant splitting. This splitting, which is called the Oseledet splitting  defined in a total probability set (a set of full measure for any \(f\)-invariant measure) and is related to each exponent. Every vector in the splitting grows exponentially in a rate given by the respective Lyapunov exponent. Such splitting is a measurable function on \(X\), and provides the asymptotic dynamical information of the action of the cocycle. Therefore, understanding the behaviour of Lyapunov exponents, the Oseledets splitting and how they behave has become a broad and active field of study.

Let \(\mathcal{E}\) be a vector bundle over a topological space \(X\) and \(F_A:\mathcal{E}\rightarrow \mathcal{E}\) a linear cocyle induced by a map \(A:X\rightarrow \mathcal{G}\). An invariant splitting \(\mathcal{E} = E \oplus F\) is called \emph{hyperbolic} if vectors in \(E\) are uniformely expanded and vectors in \(F\) are uniformly contracted by the action of \(A\) on fibers. Hyperbolicity is a very strong condition, therefore weaker forms have been widely studied in the literature, one of those forms is \emph{domination}. A cocycle \(A\) has a Dominated splitting if there exists an \(A\)-invariant splitting \( E \oplus F\) where all vectors in \(E\) are more expanded than vectors in \(F\) by the action of \(A\).

The notion of Dominated splitting appeared and played an important role in the works of Ma\-ñé~\cite{Mane78}, Pliss~\cite{Pliss72} and Liao~\cite{Liao80} on Smale's Stability Conjecture. The term ``Dominated splitting'' was introduced by Ma\-ñé in~\cite{Mane84}. The concept itself was independently used in the theory of Ordinary Differential Equations by the Russian School under the name of \emph{exponential separation} (see~\cite{Palmer82} for further details). Dominated splitting continue to be an important subject that appears frequently in Dynamical Systems~\cite{Pujals2007} and Control Theory~\cite{ColKli00}. Despite being defined for smooth dynamical systems, the notion of Dominated splitting extends naturally to Linear Cocycles.  

It is natural to ask when the Oseledets splitting is a Dominated splitting. Due to the properties of Dominated splittings this implies that the Oseledets splitting, that is defined measurably, admits a continuous extension. The main response to this in the \(C^0\) topology was given by Bochi and Viana in~\cite{BochiViana_2005} which can be summarized by the following dichotomy: If \(A_0\) is a continuity point for the Lyapunov Spectrum then the Oseledets splitting is either dominated or trivial. Note that this means that the entire splitting is dominated. We remark that continuity of the Lyapunov Spectrum is \(C^0-\)generic, meaning that this dichotomy is present in a residual set in the space of linear cocycles.

In this paper, we explore further the techniques used by Bochi and Viana to give an answer to the following weaker question.

\begin{question}
What conditions on the Lyapunov exponents imply the existence of a Dominated splitting?
\end{question}

Our Main Theorem states that, in the presence of a uniform gap between the \(p\)-th and the \((p+1)\)-th Lyapunov exponent in a \(C^0\) neighbourhood of a cocycle, there exists a Dominated splitting \(E \oplus F\) of index \(p\). More precise definitions are given in section~\ref{definitions}.

We say that the Lyapunov spectrum \(\lambda_1(A) \geq \ldots \geq \lambda_d(A)\) of a cocycle \(A \in \text{\textup{C}}(X,\mathcal{G})\) has the \emph{uniform \(p-\)gap property} if there exists a \(C^0-\)neigh\-bour\-hood \(\mathcal{U}_A\) and \(\beta > 0\) such that for every \(B \in \mathcal{U}_A\) it holds that
\[
\lambda_{p}(B) - \lambda_{p+1}(B) \geq \beta.
\]
The number \(\beta\) is the \emph{\(p-\)gap} between exponents.

Our main result states that this property ensures the existence of a dominated splitting of index \(p\).

\begin{mnteo}
Let \(X\) be an infinite compact metric space and \(\mu\) a Borel regular measure. Let \(f \in \Hom[\mu]{X}\) such that \(\mu\) is an ergodic measure and let \(A \in \text{\textup{C}}(X,\mathcal{G})\) be a Linear Cocycle over \(f\) such that its Lyapunov spectrum has the uniform \(p-\)gap property. Then there exists a Dominated splitting of index \(p\) on \(\supp (\mu)\).
\end{mnteo}

For convenience, we will develop the document as if the measure \(\mu\) is fully supported. However, all proofs and statements are correct by restricting to \(\supp (\mu)\) when the measure is not fully supported.

This Dominated splitting is given by the Oseledets splitting, where it is defined, as follows:
Let \(\gamma_1(A) > \ldots > \gamma_{k(A)}(A)\) be the Lyapunov exponents counted without multiplicity and let \(s \in \{1, \ldots, k(A)\}\) be given by \(\gamma_s(A) = \lambda_p(A)\) and \(\gamma_{s+1}(A) = \lambda_{p+1}(A)\). Denote by
\begin{equation}\label{split}
E = \bigoplus_{i=1}^s E_i, \quad \quad F= \bigoplus_{i=s+1}^{k(A)} E_i.
\end{equation}
The Main Theorem implies the existence of a continuos extension of the splitting \(E\oplus F\) over all of \(X\) and such extension is dominated.

Our main result keeps in the same line as other similar results that link the existence of uniform gaps among the Lyapunov spectrum with the existence of dominated splittings. A very similar conclusion as our Main Theorem has been recently announced by Kassel and Potrie in~\cite{kaspot2021}. The authors define uniform \(p-\)gap as a gap between \(p\)-th and \((p+1)\)-th exponents of a cocycle \(A\)  among all invariant measures over a subshift of finite type. They include a discussion and further references for similar results with this alternative notion of uniform gap. 

Salgado in~\cite{Salgado2019} showed that, provided the existence of a continuous invariant splitting, a uniform gap of Lyapunov exponents in a full probability set is equivalent to Dominated splitting. Araujo and Salgado have other relevant work related to dominated splitings and linear cocycles, and exterior powers and dominated splittings~\cite{AraujoSalgado2013,AraujoSalgado2015}.

We now state some consecuences of the Main Theorem involving continuity of Lyapunov exponents. We say that a cocycle \(A \in \text{\textup{C}}(X,\mathcal{G})\) is a \emph{point of continuity of the \(p\)-th Lyapunov exponent} \(\lambda_p\) if it is a point of continuity of the function \[\text{\textup{Lyap}}_p:\text{\textup{C}}(X,\mathcal{G}) \rightarrow \R\] that maps \(A \mapsto \lambda_p(A)\). That is, for every sequence of cocycles \(A_n\) such that \(A_n \rightarrow A\) then \(\lambda_p(A_n) \rightarrow \lambda_p(A)\). 

\begin{coro}\label{mncoro}
Let \(A \in \text{\textup{C}}(X,\text{\textup{SL}}(3,\mathbb{R}))\) be a point of continuity of the 2nd Lyapunov exponent. Then, either the whole Oseledet splitting is dominated or \(\lambda_2 = 0\).
\end{coro}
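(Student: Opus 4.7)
The plan is to use the Main Theorem to obtain a dominated decomposition at one of the two possible indices and then invoke the Bochi--Viana $C^0$-dichotomy recalled in the introduction to further refine it. If $\lambda_2(A) = 0$ there is nothing to prove, so I would assume $\lambda_2(A) \neq 0$. Because the cocycle takes values in $\text{SL}(3,\R)$, the constraint $\lambda_1+\lambda_2+\lambda_3 = 0$ combined with $\lambda_1 \geq \lambda_2 \geq \lambda_3$ means that the sign of $\lambda_2(A)$ selects which gap is forced by continuity of $\lambda_2$ alone; I will sketch the case $\lambda_2(A) > 0$, as the case $\lambda_2(A) < 0$ is completely analogous after interchanging the roles of the $(\lambda_2,\lambda_3)$- and $(\lambda_1,\lambda_2)$-gaps.

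For $B$ in a small $C^0$-neighborhood of $A$, continuity of $\lambda_2$ at $A$ gives $\lambda_2(B) > \tfrac{1}{2}\lambda_2(A)$, and the inequality $\lambda_3(B) \leq -2\lambda_2(B)$---which follows from $\lambda_1(B) \geq \lambda_2(B)$ together with $\lambda_1(B)+\lambda_2(B)+\lambda_3(B)=0$---yields the uniform $2$-gap
\[
\lambda_2(B) - \lambda_3(B) \;\geq\; 3\lambda_2(B) \;\geq\; \tfrac{3}{2}\lambda_2(A) \;>\; 0.
\]
The Main Theorem then produces a dominated splitting $E \oplus F$ of index $2$ on $\supp(\mu)$, with $\dim E = 2$ and $\dim F = 1$. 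The restricted cocycle $A\restrict{E}$ is a continuous $\text{GL}(2,\R)$-cocycle whose Lyapunov spectrum consists of $\lambda_1(A) \geq \lambda_2(A)$.

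To finish, I would verify that $A\restrict{E}$ is itself a continuity point of its full Lyapunov spectrum and then invoke the Bochi--Viana dichotomy in dimension two. Since domination is $C^0$-open, for $B$ near $A$ the splitting persists as a continuously varying $E_B \oplus F_B$ and the restricted cocycle $B\restrict{E_B}$ depends continuously on $B$; its bottom exponent equals $\lambda_2(B)$ (continuous at $A$ by hypothesis), while the sum of its two exponents is $\int_X \log\lvert\det B(x)\restrict{E_B(x)}\rvert\,d\mu(x)$, which is continuous in $B$, so its top exponent is also continuous at $A$. The Bochi--Viana dichotomy applied to $A\restrict{E}$ then forces either that its Oseledets splitting is trivial---so $\lambda_1(A) = \lambda_2(A)$ and $E \oplus F$ itself is already the whole Oseledets splitting of $A$---or that it is dominated, refining $E$ as $E_1\oplus E_2$ and yielding a full dominated splitting $E_1 \oplus E_2 \oplus F$. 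The main subtlety I anticipate lies in exactly this transfer of continuity: the hypothesis provides continuity of $\lambda_2(A)$ only, not of the whole spectrum, and it is the presence of the dominated splitting $E \oplus F$ that permits the sum of exponents of the restricted cocycle to be written as a manifestly $C^0$-continuous integral, bridging the weaker hypothesis to the stronger one required by Bochi--Viana.
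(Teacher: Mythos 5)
Your route is genuinely different from the paper's, and while the first half is sound, the refinement step has a real gap. You turn the hypothesis into a uniform $p$-gap using the $\text{SL}(3,\R)$ constraint (the chain $\lambda_2(B)-\lambda_3(B) \geq 3\lambda_2(B) > 0$, and its dual for $\lambda_2(A)<0$) and then invoke the Main Theorem directly to produce a dominated splitting $E\oplus F$. This part is correct. The paper instead argues by cases: if \emph{some} dominated splitting exists, its one-dimensional factor makes an additional exponent a $C^0$-continuous determinant integral, so the whole spectrum is continuous and Corollary~\ref{corobochiviana} applied to the $3$-dimensional cocycle $A$ gives the fully dominated splitting; if \emph{no} dominated splitting exists, the contrapositive of the Main Theorem plus continuity of $\lambda_2$ produces $B$ near $A$ whose three exponents all share a sign, contradicting $\lambda_1+\lambda_2+\lambda_3=0$. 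Your constructive version of the first step is a nice alternative to the contradiction in the second; but your refinement step is weaker than the paper's and is where the trouble lies.

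The gap is in the claim that $A\restrict{E}$ is a continuity point of its full Lyapunov spectrum, which you need in order to apply the Bochi--Viana dichotomy to the restricted cocycle. Your determinant-integral argument shows that the exponents of $B\restrict{E_B}$ vary continuously as $B$ ranges over $\text{SL}(3,\R)$-cocycles near $A$, but Bochi--Viana for $A\restrict{E}$ requires continuity with respect to \emph{all} $C^0$-small perturbations $\tilde B$ of the two-dimensional cocycle on the bundle $E$, and an arbitrary such $\tilde B$ need not be of the form $B\restrict{E_B}$. The natural patch, extending $\tilde B$ to $\tilde B \oplus A\restrict{F}$, lands in $\text{GL}(3,\R)$ rather than $\text{SL}(3,\R)$, whereas your hypothesis controls $\lambda_2$ only along $\text{SL}(3,\R)$-perturbations, so a determinant normalization and exponent bookkeeping are needed and not supplied. (There is also the minor point that $E$ need not be a trivial bundle, so $A\restrict{E}$ is not literally an element of $\text{C}(X,\text{GL}(2,\R))$.) The cleanest repair is the one the paper uses: do not restrict at all. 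Once $E\oplus F$ with $\dim F=1$ is in hand, $\lambda_3(B) = \int_X \log\bigl|\det B(x)\restrict{F_B(x)}\bigr|\,d\mu(x)$ is continuous in $B$, so together with $\lambda_2$ and $\lambda_1=-\lambda_2-\lambda_3$ the entire three-dimensional spectrum is continuous at $A$, and Corollary~\ref{corobochiviana} applied to $A$ itself yields the fully dominated Oseledets splitting, the trivial branch being excluded by $\lambda_2(A)\neq 0$.
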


Since continuity of consecutive Lyapunov exponents imply the presence of uniform gaps, the Main Theorem gives an alternate route to show Bochi-Viana Theorem above, in the context of Linear Cocycles and ergodic measures. In what follows we explain this briefly.

Note that if a cocycle \(A \in \text{\textup{C}}(X,\mathcal{G})\) is a point of continuity of the \(p\)-th and \((p+1)\)-th Lyapunov exponent and it holds that \(\lambda_p(A) > \lambda_{p+1}(A)\) it follows that, taking a small \(\e>0\), there exists a \(C^0\)-neightbourhood of \(A\) such that there is a uniform gap between the \(p-\)th and \((p+1)-\)th Lyapunov exponents. The following corollary follows:

\begin{coro}
Let \(A \in \text{\textup{C}}(X,\mathcal{G})\) be a point of continuity of the \(p\)-th and \((p+1)\)-th Lyapunov exponent such that \(\lambda_p(A) > \lambda_{p+1}(A)\).
Then there exists a Dominated splitting of index \(p\) for \(A\).
\end{coro}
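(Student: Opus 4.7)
The plan is to deduce this corollary directly from the Main Theorem by verifying that the hypotheses on continuity, together with the strict inequality $\lambda_p(A) > \lambda_{p+1}(A)$, force the uniform $p$-gap property on some $C^0$-neighbourhood of $A$. In effect, the corollary reduces to a quantitative continuity argument feeding into the Main Theorem; the substance of the work has already been done in proving the Main Theorem, and the corollary is essentially a packaging statement.

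The key step is as follows. Set $\delta := \lambda_p(A) - \lambda_{p+1}(A) > 0$ and pick $\varepsilon := \delta/4$. By hypothesis, $A$ is a point of continuity of both $\mathrm{Lyap}_p$ and $\mathrm{Lyap}_{p+1}$ as functions $\mathrm{C}(X,\mathcal{G}) \to \mathbb{R}$, so there exists a $C^0$-neighbourhood $\mathcal{U}_A$ of $A$ such that
\[
|\lambda_p(B) - \lambda_p(A)| < \varepsilon \quad\text{and}\quad |\lambda_{p+1}(B) - \lambda_{p+1}(A)| < \varepsilon
\]
for every $B \in \mathcal{U}_A$ (intersect the two neighbourhoods given by continuity of each exponent separately). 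For such $B$ one then has
\[
\lambda_p(B) - \lambda_{p+1}(B) \geq \bigl(\lambda_p(A) - \varepsilon\bigr) - \bigl(\lambda_{p+1}(A) + \varepsilon\bigr) = \delta - 2\varepsilon = \tfrac{\delta}{2}.
\]
Thus the Lyapunov spectrum of $A$ has the uniform $p$-gap property with gap constant $\beta := \delta/2$ on the neighbourhood $\mathcal{U}_A$.

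With this verified, the Main Theorem applies verbatim and yields a dominated splitting of index $p$ on $\mathrm{supp}(\mu)$, which by the discussion following the Main Theorem extends continuously (and dominatedly) to all of $X$. The only obstruction one might worry about is that the Main Theorem is stated in the setting of an ergodic homeomorphism $f$ preserving a Borel regular measure $\mu$; one implicitly inherits this setting from the statement of the corollary (the notion of Lyapunov exponent $\lambda_p(B)$ itself depends on such a choice), so no additional hypothesis is needed. There is no real obstacle here: the content of the corollary is entirely in the Main Theorem, and the proof amounts to the triangle-inequality computation above.
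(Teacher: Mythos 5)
Your argument is correct and matches the paper's own reasoning: the paper derives this corollary from the Main Theorem by observing that continuity of the $p$-th and $(p+1)$-th exponents together with $\lambda_p(A) > \lambda_{p+1}(A)$ yields a uniform gap in a $C^0$-neighbourhood. You simply make the standard $\varepsilon$-argument explicit, which is exactly what the paper leaves implicit.
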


Moreover, if the cocycle is a point of continuity of the whole Lyapunov Spectrum we get this known result which was first proved in~\cite{BochiViana_2005}. 

\begin{coro}[Bochi-Viana Corollary 1~\cite{BochiViana_2005}]\label{corobochiviana}
Let \(A \in \text{\textup{C}}(X,\mathcal{G})\) be a point of continuity of the Lyapunov Spectrum. Then either all exponents are equal at almost every point, or there exists a dominated splitting which coincides with the Oseledets splitting almost everywhere.
\end{coro}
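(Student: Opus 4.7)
The plan is to reduce Corollary~\ref{corobochiviana} to the preceding corollary by patching together the dominated splittings produced at each spectral gap. Assume that $A$ is a point of continuity of the full Lyapunov spectrum and that not all exponents coincide $\mu$-almost everywhere, as otherwise the first alternative already holds. Let $\gamma_1(A) > \gamma_2(A) > \cdots > \gamma_k(A)$, with $k \geq 2$, be the Lyapunov exponents of $A$ counted without multiplicity, and set $p_i := \sum_{j=1}^{i} \multip{\gamma_j(A)}$ for $i \in \{1,\ldots,k-1\}$, so that $\lambda_{p_i}(A) = \gamma_i(A) > \gamma_{i+1}(A) = \lambda_{p_i+1}(A)$.

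Continuity of $\mathrm{Lyap}_p$ at $A$ for every $p$ implies, in particular, continuity of both $\lambda_{p_i}$ and $\lambda_{p_i+1}$ at $A$ for each $i$. Hence the preceding corollary applies at each of the $k-1$ indices $p_i$, producing a continuous $A$-invariant dominated splitting $\mathcal{E} = E^{(i)} \oplus F^{(i)}$ of index $p_i$ on $\supp(\mu)$. From the explicit description~\eqref{split} of how the Oseledets subspaces assemble into such a splitting, together with the uniqueness of a dominated splitting of a prescribed index, on a full $\mu$-measure set $E^{(i)}$ must agree with the sum of the Oseledets subspaces associated to $\gamma_1(A),\ldots,\gamma_i(A)$. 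Because $p_1 < p_2 < \cdots < p_{k-1}$, the fibers satisfy $E^{(i)} \subset E^{(i+1)}$ $\mu$-a.e., and by continuity this inclusion extends to all of $\supp(\mu)$; dually $F^{(i)} \supset F^{(i+1)}$ everywhere.

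With the nesting in hand, I would set $\hat{E}_1 := E^{(1)}$, $\hat{E}_i := E^{(i)} \cap F^{(i-1)}$ for $2 \leq i \leq k-1$, and $\hat{E}_k := F^{(k-1)}$. These are continuous $A$-invariant subbundles whose direct sum is $\mathcal{E}$ and which coincide with the Oseledets decomposition $\mu$-almost everywhere. To see that $\hat{E}_1 \oplus \cdots \oplus \hat{E}_k$ is dominated, pick indices $i < j$; since $\hat{E}_i \subset E^{(i)}$ and $\hat{E}_j \subset F^{(i)}$, the domination of $E^{(i)}$ over $F^{(i)}$ restricts to domination of $\hat{E}_i$ over $\hat{E}_j$, which yields the required pairwise domination of the finer splitting.

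The main obstacle is justifying that the coarse splittings supplied by the preceding corollary are mutually compatible, i.e.\ that they nest monotonically. This reduces to the uniqueness of a dominated splitting of given index on $\supp(\mu)$, which in turn follows from the fact that any such splitting must, at $\mu$-a.e.\ point, absorb precisely the Oseledets subspaces corresponding to the top $p_i$ exponents; once this is settled, the remainder of the argument is purely linear-algebraic assembly from the definition of domination.
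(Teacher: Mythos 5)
Your proposal is correct and follows the same route the paper takes (the paper simply observes that the preceding corollary applies at each spectral gap and leaves the assembly implicit). Your additional details — establishing the nesting $E^{(i)}\subset E^{(i+1)}$ via uniqueness of the dominated splitting of fixed index and the a.e.\ identification with the Oseledets grouping, then intersecting to produce $\hat{E}_i=E^{(i)}\cap F^{(i-1)}$ and checking domination — correctly fill in what the paper leaves to the reader, and one could even shorten the last step by noting that $\hat{E}_1\oplus\cdots\oplus\hat{E}_i=E^{(i)}$ and $\hat{E}_{i+1}\oplus\cdots\oplus\hat{E}_k=F^{(i)}$, so the required telescope dominations are literally the given coarse ones.
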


Note that in~\cite{BochiViana_2005} there is a different scope, focusing mainly on perturbations on volume-preserving diffeomorphisms, not necessarily ergodic, and then applying those techniques on linear cocycles take values not only on \(\gld\) or \(\sld\) but on any submanifold of \(\gld\) fulfilling an accessibility condition. Therefore the original proof of Corollary~\ref{corobochiviana} had to circumvent the use of specific Lebesgue measure tools, such as Vitali's Covering Lemma, which are not valid for any measure on a compact topological space. By using our Main Theorem we give an alternate path that avoids this by using topological properties of the base space \(X\), such as Urysohn's Lemma.

For the proof we mantain the original path taken by Bochi and Viana. The contributions in this article lie on the improvement of intermmediate lemmas (basically optimizing quantifiers), and combining them with a new idea (considering a cocycle that minimizes the sum of exponents).


\vspace{\baselineskip}
We now give an outline of the path we take to prove the main theorem. Suppose the Oseledets splitting, grouped in the spaces \(E \oplus F\) as shown in eq~(\ref{split}) is not \(m-\)dominated for a large \(m\), that is
\begin{equation}\label{outline1}
\| A^m(x)\restrict{F} \| \|(A^m(x)\restrict{E})^{-1}\| > \frac{1}{2},
\end{equation}
then, if \(m > m_0\) for some \(m_0\) that depends on the how large the norm of \(A\) is, we can make small perturbations by composing with rotations and hyperbolic matrices along the orbit of \(x\) that moves a vector in \(E(x)\) into a vector of \(F(f^m x)\).

In fact, for \(\mu\)-almost every point \(x\) and any \(n\) sufficiently large we find an \(\ell \approx n/2\) and \(y \in f^{\ell}(x)\) that fulfills (\ref{outline1}). This way, by composing with rotations and hyperbollic matrices, we can get a new cocycle \(B\) near \(A\) such that, coincides with \(A\) along the orbit of \(x\) for \(\approx n/2\) steps. The new cocycle changes a vector from \(E\) into a vector of \(F\) and coincides again with \(A\) along the rest of the orbit (\(\approx n/2\) steps again). It follows that, if we compare the norm of the \(p-th\) exterior cocycles we see we lose some expansion. While \(\| \wedge^p A^n(x)\| \approx \exp(n(\lambda_1 + \cdots + \lambda_p))\) we notice that
\[
\| \wedge^p B^n(x) \| \lesssim \exp\left(n(\lambda_1 + \cdots + \lambda_{p-1} + \frac{\lambda_p + \lambda_{p+1}}{2})\right).
\]
We repeat this local process on a large measure set and using a Kakutani Rokhlin argument we can get a cocycle \(\hat{B}\) arbitrarily close to \(A\) such that the sum of the first \(p\) Lyapunov exponents drops by approximately half of the gap.

As we stated above, this perturbation can be done in an open set as long as there is lack of \(m-\)domination. So if we have a cocycle \(A\) with the uniform \(p-\)gap property and such that \(E_A\oplus F_A\) is not dominated, then we can do a similar perturbation to any \(A'\) in a neighbourhood of \(A\) and drop the sum of the first \(p\) Lyapunov exponents. This eventually contradicts the boundness of exponents. 

\vspace{\baselineskip}

The paper is organised as follows. In section 2 we give the basic definitions, theorems and properties used in the paper. 

In Section 3 we introduce the space interchanging sequences which are the main tool for perturbations. We show the conditions for existence on neighbourhoods and state a result estimating their norm when applied to the Oseledets splitting.

In Section 4 the space interchanging sequences are used to create perturbations of cocycles, then following a Kakutani-Rokhlin towers argument we show such perturbations can be globalised to obtain cocycles with lower Lyapunov exponents.

In Section 5 gives a proof for the Main Theorem by arriving to a contradiction using the perturbations defined in the previous sections and explain how to derive corollary~\ref{mncoro}.

In Section 6 we conclude by posing some questions that derive from the Main Theorem.


\subsection*{Acknowledgements} The author would like to thank Radu Saghin and Francisco Valenzuela-Henríquez for their guidance and encouragement during his PhD thesis at PUCV which originates this work, and the numerous suggestions to improve it. The author is also grateful to Lucas Backes for pointing out mistakes in Proposition~\ref{lemaperturbativo}. The author is thankful to Jairo Bochi, Mauricio Poletti and Rafael Potrie for their comments, helping to improve readability overall and suggestions for further work.

\section{Basic definitions and preliminaries}\label{definitions}

Let \(X\) be an infinite compact metric space endowed with \(\mu\) a probability measure. We say \(\mu\) is \emph{Borel regular} if for any measurable set \(M\) it holds that
\begin{align*}
\mu(M) = \inf \{ \mu(O)\ : M \subset O\ :\ O\text{ is an open set}\} \\
\mu(M) = \sup \{ \mu(C)\ : C \subset M\ :\ C\text{ is a closed set}\}.  
\end{align*}
Since \(X\) is a compact space, every closed set is compact, so the measure \(\mu\) is trivially \emph{tight}
\[
\mu(M) = \sup \{ \mu(K)\ : K \subset M\ :\ K\text{ is a compact set}\}.
\]

We suppose \(\mu\) is fully supported this is equivalent to \(\mu\) being \emph{strictly positive}, that is, every non empty open set \(U \subset X\) holds that \(\mu(U) > 0\). 

We note that these hypotheses on the measure are satisfied on many important examples. Such as compact Riemannian manifolds when \(\mu\) is the measure induced by a volume form. Or a sample space \(\{1,\ldots,N\}^{\Z}\) when \(\mu\) is the Bernoulli measure.

Let \(f:X \rightarrow X\), we say that \(f\) \emph{preserves the measure \(\mu\)} if for every measurable set \(M \subset X\) it holds that \(\mu(f^{-1}(M)) = \mu(M)\). We denote by \(\Hom[\mu]{X}\) the set of homeomorphisms that preserve the measure \(\mu\). The measure \(\mu\) is \emph{ergodic} if for every measurable set \(M\) such that \(f(M) = M\) then \(\mu(M) = 0 \text{ or } 1.\) Let \(f\) be a homeomorphism, then fully supported ergodic measures are \emph{aperiodical} that is the set of periodic points \(\per(f)\) holds that \(\mu(\per(f))=0\). 

\subsection{Linear cocycles}
Let \(f \in \Hom[\mu]{X}\). Let \(\mathcal{G}\) denote either of the matrices groups \(\gld\) or \(\sld\). Given a continuous map \(A: X \rightarrow \mathcal{G}\) we associate the \emph{linear cocycle} on \(X \times \R^d\)
\[
    F_A:X\times \R^d \rightarrow X \times \R^d, \quad F_A(x,v)=
    \left(f(x),A(x)v\right).
\]
Since we keep the base \(f\) of the cocycle fixed, as an abuse of notation we identify the linear cocycle \(F_A\) with the function \(A: X \rightarrow \mathcal{G}\). We denote the space of continuous functions from \(X\) to \(\mathcal{G}\), and the space of linear cocycles, as \(\text{\textup{C}}(X,\mathcal{G})\).

For cocycles we use the following notation for iterations
\[
F_A^n(x,v) = (f^n(x),A^n(x)v).
\]
Observe that then \(A\) satisfies the \emph{cocycle property}: \(A^{n+m}(x) =
A^n(f^m(x))A^m(x)\) for every \(n,m \in \Z\).

For any \(M \in \mathcal{G}\) we define the \emph{operator norm} as \[\|M\| := \max\{\|Mv\|,\ v \in \R^d,\ \|v\| = 1\}.\]
This induces the \emph{\(C^0\) or uniform norm} on \(\text{\textup{C}}(X,\mathcal{G})\) as follows: given \(A \in \text{\textup{C}}(X,\mathcal{G})\),
\[
\|A\| = \max_{x \in X}\{\|A(x)\|\}.
\]

The continuity of the function \(A\) implies that \(\log^+ \| A^{\pm 1}\| \in L^1(\mu)\). We say that \(\gamma\) is a
\emph{Lyapunov exponent} for the cocycle at the point \(x\) if there exists \(v \in \R^d\)
different than zero such that
\[
    \lim_n \frac{1}{n} \log \Vert A^n{(x)}v\Vert = \gamma,
\]
in this case we say that \(v\) is a \emph{Lyapunov vector} for \(x\).

A point \(x \in X\) is \emph{regular} if every vector \(v \in \R^d\) different than
zero is a Lyapunov vector. We shall denote the set of regular points for the cocycle \(A\) as \(\reg(A)\). The following theorem
(see~\cite{Viana2014} for details) guarantees that, far from being empty, the
set \(\reg(A)\) has \emph{full probability}, this means that it has full measure
for any \(f-\)invariant probability. We denote the Grassmanian of \(\R^d\) by \(\G(d)\).

\begin{teo}[Oseledets]\label{oseledets} For \(\mu\) almost every \(x\in X\), there exist functions \(k=k(x)\),
\(\gamma_1(A,x)>\gamma_2(A,x)>\cdots > \gamma_k(A,x)\) together with a direct sum
splitting \(\R^d =
E_1(x)\oplus E_2(x) \oplus \cdots \oplus E_k(x)\) that verify
\begin{enumerate}
    \item These functions are \(f\)-invariant: \(k(f(x))=k(x)\) and
        \(\gamma_i(f(x))=\gamma_i(x)\).
    \item The functions \(x \mapsto k(x)\), \(x\mapsto \gamma_i(x)\) y \(x \mapsto
        E_i(x)\) (taking values in \(\N\), \(\R\) and \(\G(d)\) respectively) are
        measurable.
    \item The splitting is invariant under the action of the cocycle, this
        means that
        \(A(x)E_i(x) = E_i(f(x))\).
    \item It holds that, for every non-null \(v \in E_i(x)\),
\[
    \lim_n \frac{1}{n}\log \left\Vert A^n(x)v \right\Vert = \gamma_i(x).
\]
\item For any disjoint sum of the spaces within the splitting 
\[
\lim_{n \rightarrow \pm \infty} \frac{1}{n} \log \left| \sin \measuredangle\left( \oplus_{i \in I} E_i(f^n(x)), \oplus_{j \in J} E_j(f^n(x))\right) \right| = 0 \text{ whenever } I \cap J = \emptyset.
\]
\end{enumerate}
\end{teo}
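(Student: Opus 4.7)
The plan is to follow the classical proof of the multiplicative ergodic theorem, combining Kingman's subadditive ergodic theorem with an analysis of exterior powers and the construction of a splitting by intersecting forward and backward filtrations.

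First, I would establish the Lyapunov spectrum through exterior powers. For each \(k\in\{1,\ldots,d\}\) the sequence \(\varphi^n_k(x):=\log\|\wedge^k A^n(x)\|\) is subadditive by the cocycle property together with submultiplicativity of the operator norm under exterior powers; integrability follows from \(\log^+\|A^{\pm 1}\|\in L^1(\mu)\), guaranteed by continuity and compactness of \(X\), since \(\|\wedge^k A\|\leq \|A\|^k\). Kingman's theorem then yields an \(f\)-invariant measurable limit \(\Lambda_k(x)=\lim_n \tfrac{1}{n}\varphi^n_k(x)\), and the Lyapunov exponents with multiplicity are recovered via \(\lambda_i(x)=\Lambda_i(x)-\Lambda_{i-1}(x)\). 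Grouping equal values yields the distinct exponents \(\gamma_1(x)>\cdots>\gamma_{k(x)}(x)\); their \(f\)-invariance and measurability (items (1)--(2)) are immediate as a.e.\ limits of measurable \(f\)-equivariant functions.

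Second, I would construct the splitting. Define the forward filtration \(V_i(x):=\{v\in\R^d : \limsup_n \tfrac{1}{n}\log\|A^n(x)v\|\leq \gamma_i(x)\}\), which is a nested measurable flag \(\R^d = V_1(x) \supsetneq V_2(x) \supsetneq \cdots \supsetneq V_{k+1}(x)=\{0\}\) satisfying \(A(x)V_i(x)=V_i(f(x))\). Applying the same construction to the inverse cocycle \(A^{-1}\) over \(f^{-1}\) yields a dual backward filtration \(U_i(x)\), and the Oseledets subspaces are then \(E_i(x):=V_i(x)\cap U_i(x)\). Dimension counting and the complementarity of forward and backward filtrations give \(\R^d=\bigoplus_{i=1}^{k(x)} E_i(x)\) together with invariance (3). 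The actual two-sided limit in (4) follows because membership in \(E_i(x)\) bounds the growth rate by \(\gamma_i\) from above (through \(V_i\)) and from below (through \(U_i\)) in both time directions simultaneously.

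The main technical obstacle is item (5), the subexponential decay of angles between disjoint sub-sums of the splitting. My approach would exploit the classical identification of \(\sin\measuredangle(E,F)\) with a ratio of volumes: up to bounded basis-dependent factors,
\[
\sin\measuredangle\bigl(A^n E_I(x),\, A^n E_J(x)\bigr) \;\approx\; \frac{\text{vol}\bigl(A^n(E_I(x)\oplus E_J(x))\bigr)}{\text{vol}(A^n E_I(x))\cdot \text{vol}(A^n E_J(x))}.
\]
By step one, each volume factor grows exponentially at a rate equal to a prescribed sum of Lyapunov exponents (with multiplicities), and these rates cancel in the ratio since the spectrum over \(I\cup J\) is the disjoint union of the spectra over \(I\) and \(J\). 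Hence \(\tfrac{1}{n}\log\sin\measuredangle(\cdot,\cdot)\to 0\). The careful step is controlling the basis-dependent factors uniformly enough so that the subexponential cancellation survives in both positive and negative time, which is the heart of the tempered-angle property; this is where the bulk of the technical work in a complete proof would lie.
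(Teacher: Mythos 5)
The paper does not prove this statement; Oseledets' Theorem is cited as a classical background result, with the reader referred to Viana~\cite{Viana2014} for the proof. Your outline follows the standard route of that reference (Kingman's subadditive ergodic theorem applied to \(\log\|\wedge^k A^n\|\), forward and backward Lyapunov filtrations, intersection to produce the Oseledets subspaces, and a volume-ratio argument for the angle estimate), so as a strategy it is correct and consistent with what the paper implicitly relies on. Two points deserve flagging, though. First, you attribute the direct-sum decomposition \(\R^d = \bigoplus_i E_i(x)\) to ``dimension counting and the complementarity of forward and backward filtrations,'' but establishing that the flags \(V_i(x)\) and \(U_i(x)\) are actually in general position is a genuine step of the proof, not bookkeeping; it uses Lyapunov-rate comparisons closely intertwined with the tempered-angle estimates of item~(5), so the logic is not purely combinatorial. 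Second, the displayed volume-ratio identity for \(\sin\measuredangle\) is not exact: the quantity naturally controlled by wedge norms is the product of the sines of the principal angles, whereas \(\measuredangle(E_I,E_J)\) is only one of them. The two differ by a factor bounded polynomially in the dimension, so the subexponential conclusion still holds, but this mismatch should be stated explicitly rather than absorbed into ``bounded basis-dependent factors.''
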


The splitting \(\R^d = E_1(x)\oplus\cdots \oplus E_{k(x)}(x)\) is called
\emph{Oseledets splitting}.

We define the \emph{multiplicity} of the exponent \(\gamma_i(x)\) as the
dimension of
\(E_i(x)\) and we shall denote the Lyapunov exponents, counted with multiplicity,
by \(\lambda_1(x) \geq \lambda_2(x)\geq \cdots \geq
\lambda_d(x)\). This is the \emph{Lyapunov Spectrum} of the cocycle.

The Oseledets Theorem is a refinement of the following theorem, which finds the greater and lower Lyapunov exponents by means of the norm of the cocycle.

\begin{teo}[Furstenberg-Kesten]
The limit
\[
\gamma_1(x) = \lim_{n \rightarrow \infty} \frac{1}{n} \log \| A^n(x)\|
\]
exists at \(\mu-\)almost every point. Moreover, \(\gamma_1 \in L^1(\mu)\) and
\[
\int \gamma_1(x)d\mu = \lim_{n\rightarrow \infty} \frac{1}{n} \int \log \|A^n(x)\| d\mu.
\]
\end{teo}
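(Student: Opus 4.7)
The plan is to recognize that \(\phi_n(x) := \log \|A^n(x)\|\) is a subadditive cocycle over \(f\) and then invoke Kingman's subadditive ergodic theorem. The main observation is that the cocycle property \(A^{n+m}(x) = A^n(f^m(x))\,A^m(x)\) combined with submultiplicativity of the operator norm yields
\[
\phi_{n+m}(x) \;=\; \log \|A^n(f^m(x))\, A^m(x)\| \;\leq\; \phi_n(f^m(x)) + \phi_m(x),
\]
so \((\phi_n)_{n\geq 1}\) is subadditive with respect to \(f\).

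Next I would verify the integrability hypothesis required by Kingman's theorem, namely \(\phi_1^+ \in L^1(\mu)\). Since \(X\) is compact and \(A : X \to \mathcal{G}\) is continuous, the image \(A(X)\) is a compact subset of \(\mathcal{G}\); thus both \(\|A(\cdot)\|\) and \(\|A(\cdot)^{-1}\|\) are bounded above on \(X\), and in particular \(\phi_1 = \log \|A(\cdot)\|\) is bounded, hence in \(L^1(\mu)\). No ergodicity of \(\mu\) is needed for existence; it is enough that \(f\) preserves \(\mu\) (which it does by hypothesis).

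Kingman's subadditive ergodic theorem then delivers the whole statement at once. It gives an \(f\)-invariant function \(\gamma_1 \in L^1(\mu)\) such that
\[
\gamma_1(x) \;=\; \lim_{n\to\infty} \frac{1}{n}\,\phi_n(x) \;=\; \lim_{n\to\infty} \frac{1}{n}\log\|A^n(x)\|
\]
for \(\mu\)-almost every \(x\), and moreover
\[
\int \gamma_1 \, d\mu \;=\; \inf_{n\geq 1} \frac{1}{n}\int \phi_n\, d\mu \;=\; \lim_{n\to\infty} \frac{1}{n}\int \log\|A^n(x)\|\, d\mu.
\]
The equality between infimum and limit on the right is just Fekete's lemma applied to the subadditive sequence \(a_n := \int \phi_n\, d\mu\), whose subadditivity \(a_{n+m} \leq a_n + a_m\) follows from integrating the pointwise inequality and using \(f\)-invariance of \(\mu\).

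The only point that needs any care, and which I would expect to be the main (very mild) obstacle, is verifying the integrability of \(\phi_1\); this is where compactness of \(X\) and continuity of \(A\) are essential (in contrast with more general measurable cocycles, where one has to assume \(\log^+\|A^{\pm 1}\| \in L^1(\mu)\) directly, as is mentioned just before the statement). Everything else is a direct translation of Kingman's theorem into the language of linear cocycles.
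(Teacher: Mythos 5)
The paper states the Furstenberg--Kesten theorem as a classical result and does not supply a proof (it relies on the standard literature, e.g.\ Viana's book cited nearby), so there is no argument in the paper to compare against. Your proof via Kingman's subadditive ergodic theorem is correct and is the standard modern route: subadditivity of \(\phi_n = \log\|A^n\|\), integrability of \(\phi_1^+\) from compactness of \(X\) and continuity of \(A\), and Fekete's lemma for the identity between the infimum and the limit of the averaged integrals. The only small point you leave implicit is why the limit \(\gamma_1\) is finite (Kingman allows a value \(-\infty\) in general); here it follows since \(\|A^{-1}\|\) is also bounded on \(X\), giving \(\tfrac{1}{n}\log\|A^n(x)\| \geq -\log\sup_X\|A^{-1}\|\) uniformly in \(n\).
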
 

When the measure \(\mu\) is ergodic for \(f\) then all the invariant functions in Theorem~\ref{oseledets} are constant. We can overlook the dependency of Lyapunov exponents on \(x\) and just associate them to the cocycle \(A\).  

Let \(\ell \in \{1,\ldots,d\}\), any linear map \(L \in \mathcal{G}\) induces a linear map \(\wedge^{\ell}L\) on the exterior power \(\wedge^{\ell}\R^d\) that maps every decomposable vector \(v_1 \wedge \ldots \wedge v_{\ell}\) into \(L(v_1)\wedge \ldots \wedge L(v_{\ell})\). Thus in a straightforward manner any cocycle \(A \in \text{\textup{C}}(X,\mathcal{G})\) induces a cocycle \(\wedge^{\ell} A\) on the Exterior Power \(\wedge^{\ell}\R^d\).

The Exterior Power \(\wedge^{\ell}\R^d\) is endowed with a norm, this norm on every decomposable vector \(\| v_1 \wedge \ldots \wedge v_{\ell} \| \) coincides with the volume of the \(\ell\)-dimensional parallelepiped defined by \(v_1, \ldots, v_{\ell}\). The operator norm for linear maps and the \(C^0\) norm are defined analogously to their \(\R^d\) counterparts.

For a cocycle \(A\in \text{\textup{C}}(X,\mathcal{G}) \) with a Lyapunov Spectrum given by \(\lambda_1(A) \geq \ldots \geq \lambda_d(A)\), the cocycle \(\wedge^{\ell} A\) has Lyapunov exponents, counted with multiplicity, \(\lambda_{i_1}(A) +\cdots +\lambda_{i_{\ell}}(A)\) with \(1 \leq i_1 < \cdots < i_{\ell} \leq d\). In Particular, \(\lambda^{\wedge A}_1 = \lambda_1(A) + \cdots + \lambda_{\ell}(A)\) and \(\lambda^{\wedge A}_2 = \lambda_1(A) + \ldots + \lambda_{\ell - 1} (A) + \lambda_{\ell + 1}(A).\) See~\cite{Viana2014,Arnold98} for further details. 

\subsection{Dominated splittings}

Let \(\Lambda \subset X\) be an \(f-\)invariant subset, suppose that for every \(x
\in \Lambda\) there exists a splitting in non-null spaces \(E(x)\) y \(F(x)\), with
constant dimensions such that \(\R^d = E(x) \oplus F(x)\), this defines a splitting of the bundle \(T_{\Lambda} = E \oplus F\). The \emph{index} of the splitting is \(\dim(E(x))\). 

Suppose the splitting is invariant under the cocycle \(A\), this means: \(A(E(x))=E(f(x))\)
    and \(A(F(x))=F(f(x))\) for every \(x \in \Lambda\).

\begin{defi}
    Given \(m \in \N\), we say that \(T_{\Lambda} = E \oplus F\) is an
    \(m-\)\emph{dominated splitting} if for every \(x \in \Lambda\),
    \[
        \frac{\left\Vert A^m{(x)}\restrict{F(x)}\right\Vert}{ \text{\textup{\textbf{m}}}\left(
       A^m{(x)}\restrict{E(x)}\right) } \leq \frac{1}{2}
    \]
    where \emph{\textbf{m}} denote the induced lower bound matrix norm \(\text{\textup{\textbf{m}}}(L) = \| L^{-1}\|^{-1}\).

    We say that \(T_{\Lambda} = E \oplus F\) is a \emph{dominated splitting} if
    it is \(m-\)dominated for some \(m \in \N\). In such case we will write \(E \succ F\).

\end{defi}

Moreover, if we have a splitting $T_{\Lambda} = E_1 \oplus E_2 \oplus
E_3$, we say it is a \emph{dominated splitting} if $E_1 \succ E_2 \succ
E_3$ or, equivalently, $E_1 \succ (E_2 \oplus E_3)$ and $(E_1 \oplus E_2) \succ
E_3$. This can be generalised, in an analogous manner, to any number (at most
$d$) of subspaces.

We list some basic properties of dominated splittings, for details and proofs see~\cite{BonattiDiazViana}.
\begin{itemize}
\item \emph{Transversality.} If \(T_{\Lambda} = E \oplus F\) is a dominated splitting, then the angle \(\measuredangle(E(x),F(x))\) is bounded away from zero for all \(x \in \Lambda\).
\item \emph{Uniqueness.} If \(T_{\Lambda} = E^1 \oplus F^1\) and \(T_{\Lambda} = E^2 \oplus F^2\) are two dominated splittings with equal index then \(E^1 = E^2\) and \(F^1 = F^2\).
\item \emph{Continuity.} A dominated splitting \(T_{\Lambda} = E\oplus F\) is continuous and extends continously to a dominated splitting over the closure of \(\Lambda\).
\item \emph{Persistence under perturbations.} Let \(\Lambda_A\) an invariant set where \(T_{\Lambda_A}\) admits a dominated splitting, then there exists a neighbourhood \(U\) of \(\Lambda_A\) and a \(C^0\) neighbourhood \(\mathcal{U}\) of \(A\in \text{\textup{C}}(X,\mathcal{G})\) such that for every \(B \in \mathcal{U}\) and every compact invariant set \(\Lambda_B \subset U\) holds that \(T_{\Lambda_B}\) admits a dominated splitting.
\end{itemize}

We say then that an invariant splitting \(E \oplus F\) is not \(m-\)dominated at \(y \in X\) if:
\begin{equation}\label{nondom}
\frac{\| A^m(y)\restrict{F(y)} \|}{\text{\textup{\textbf{m}}}(A^m(y)\restrict{E(y)})} > \frac{1}{2}.
\end{equation}
And \(E \oplus F\) is \emph{eventually not \(m-\)dominated} at \(x \in X\), if there exists \(n \in \Z\) such that the splitting is not \(m-\) dominated at \(f^n(x)\). 

Given \(m \in \N\), let \(\mathcal{D}_{E\oplus F}(A,m)\) be the set of points $x$ such that the splitting \(E\oplus F\) is \(m-\)dominated along the orbit of \(x\). The set $\mathcal{D}_{E \oplus F}(A,m)$ is closed and invariant, therefore for any \(m \in \N\), if \(\mu\) is an ergodic measure, \(\mu(\mathcal{D}_{E\oplus F}(A,m))\) is either \(0\) or \(1\).

For such splitting, we say a cocycle \(A\) is not \(m-\)dominated when the set \(X \setminus \mathcal{D}_{E\oplus F}(A,m)\) has full measure. In conclusion, if \(\mu\) is ergodic then either \(E \oplus F\) is \(m-\)dominated or \(\mu\)-almost every point is eventually not \(m-\)dominated.

\subsection{Other classical theorems}

In this subsection we state two Classical Theorems that will be used. We give references for the interested reader.

A classical theorem in Ergodic Theory is Birkhoff's Theorem. Given a measurable set \(E \subset \) and a point \(x \in X\), we define the \emph{mean soujourn time} of \(x\) to \(E\) as the number
\[
\tau(E,x) = \lim_n \frac{1}{n} \# \left\{ j \in \{0, \ldots, n-1\}\text{ such that }\ f^j(x)\in E\right\}.
\]

Note that, if exists, then \(\tau(E,f(x)) = \tau(E,x)\). This number though is not defined in general. Birkhoff's Theorem states that it is defined almost everywhere when \(f\) preserves the measure \(\mu\).

\begin{teo}[Birkhoff]\label{birkhoff}
Let \(f:X \rightarrow X\) be a measurable transformation and \(\mu\) be a probability measure invariant under \(f\). Given any measurable set \(E \subset X\) the mean soujourn time \(\tau(E,x)\) exists \(\mu-\)almost every point \(x \in X\).
Moreover, it holds that \(\tau(E,x)\) is integrable over \(\mu\) and
\[
\int \tau(E,x) d\mu(x) = \mu(E).
\]
\end{teo}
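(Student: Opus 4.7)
The plan is to derive the statement from the classical \emph{Maximal Ergodic Theorem}, which is where the only nontrivial work lies. Set $\varphi := \mathbf{1}_E$ and $S_n\varphi(x) := \sum_{j=0}^{n-1}\varphi(f^j(x))$, so that $\tau_n(E,x) = S_n\varphi(x)/n$. Define $\overline{\tau}(x) := \limsup_n \tau_n(E,x)$ and $\underline{\tau}(x) := \liminf_n \tau_n(E,x)$; both are measurable and take values in $[0,1]$, and the identity $S_{n+1}\varphi(x) = \varphi(x) + S_n\varphi(f(x))$, divided by $n+1$, forces $\overline{\tau}\circ f = \overline{\tau}$ and $\underline{\tau}\circ f = \underline{\tau}$. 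The strategy is then to show $\overline{\tau} = \underline{\tau}$ $\mu$-a.e.\ and to compute the common integral.

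First I would establish the maximal inequality: for every $\psi \in L^1(\mu)$ the set $A := \{x : \sup_{n \geq 1} S_n\psi(x) > 0\}$ satisfies $\int_A \psi \, d\mu \geq 0$. Setting $M_N := \max(0, S_1\psi, \ldots, S_N\psi)$ and using $S_{k+1}\psi = \psi + S_k\psi \circ f$ one verifies that on $A_N := \{M_N > 0\}$ the pointwise estimate $\psi \geq M_N - M_N\circ f$ holds; integrating over $A_N$ (where $M_N$ vanishes off $A_N$) and invoking $f$-invariance of $\mu$ produces $\int_{A_N}\psi \geq \int M_N - \int M_N\circ f = 0$. Since $A_N \uparrow A$, dominated convergence with majorant $|\psi|$ closes the proof.

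Next, for each pair of rationals $\alpha < \beta$ the level set $B_{\alpha,\beta} := \{\underline{\tau} < \alpha\} \cap \{\overline{\tau} > \beta\}$ is $f$-invariant by the previous step. Applying the maximal inequality to $\psi := (\varphi - \beta)\mathbf{1}_{B_{\alpha,\beta}}$, and noting that at every $x \in B_{\alpha,\beta}$ some $n$ achieves $S_n\varphi(x)/n > \beta$, I would deduce $\int_{B_{\alpha,\beta}}\varphi \, d\mu \geq \beta\,\mu(B_{\alpha,\beta})$; the symmetric choice $(\alpha - \varphi)\mathbf{1}_{B_{\alpha,\beta}}$ yields $\int_{B_{\alpha,\beta}}\varphi \, d\mu \leq \alpha\,\mu(B_{\alpha,\beta})$. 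Since $\alpha < \beta$, this forces $\mu(B_{\alpha,\beta}) = 0$; a countable union over rational pairs then gives $\overline{\tau} = \underline{\tau}$ $\mu$-a.e., so $\tau(E,\cdot)$ exists almost everywhere, is $f$-invariant, and lies in $L^1(\mu)$ because it is bounded.

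For the integral formula, $f$-invariance of $\mu$ immediately gives $\int \tau_n(E,\cdot)\, d\mu = \mu(E)$ for every $n$, and bounded convergence (all $\tau_n$ and $\tau$ take values in $[0,1]$) delivers $\int \tau(E,x)\, d\mu = \mu(E)$. The main obstacle is the maximal inequality itself, since once that estimate is in hand the remaining steps reduce to applying it on invariant level sets and then interchanging limit and integral.
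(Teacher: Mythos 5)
The paper does not prove this statement: it records Birkhoff's theorem as a classical result and refers the reader to~\cite{VianaOliveira2016,Mane83}, so there is no internal proof to compare against. Your argument is the standard Garsia-style proof of the maximal ergodic theorem followed by the usual level-set argument, and it is correct in all essentials: the inequality $\psi \geq M_N - M_N\circ f$ on $A_N$ holds because $S_k\psi = \psi + S_{k-1}\psi\circ f \leq \psi + M_N\circ f$ for $1\leq k\leq N$; the sets $A_N$ do increase to $A$; $f$-invariance of the level sets $B_{\alpha,\beta}$ (which follows from $\overline{\tau}\circ f=\overline{\tau}$, $\underline{\tau}\circ f=\underline{\tau}$) makes $\psi=(\varphi-\beta)\mathbf{1}_{B_{\alpha,\beta}}$ satisfy $S_n\psi = S_n(\varphi-\beta)$ on $B_{\alpha,\beta}$ and $S_n\psi=0$ off it, which is exactly what is needed to identify $A$ with $B_{\alpha,\beta}$ and extract $\beta\mu(B_{\alpha,\beta}) \leq \int_{B_{\alpha,\beta}}\varphi\,d\mu \leq \alpha\mu(B_{\alpha,\beta})$; and bounded convergence (valid since $\mu$ is a probability and $0\leq\tau_n\leq 1$) gives the integral identity. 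One small imprecision: in the chain $\int_{A_N}\psi \geq \int M_N - \int M_N\circ f = 0$ you have silently replaced $\int_{A_N} M_N\circ f$ by $\int M_N\circ f$; what actually holds is $\int_{A_N}M_N\circ f \leq \int M_N\circ f = \int M_N$, since $M_N\circ f \geq 0$, which still yields the desired inequality but should be said explicitly. Apart from that cosmetic point, the proof is complete and is precisely the textbook treatment the paper's citation points to.
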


From Birkhoff Theorem it follows that if the measure \(\mu\) is ergodic then for \(\mu-\)almost every \(x \in X\) it holds that \(\tau(E,x) = \mu(E)\). See~\cite{VianaOliveira2016,Mane83} for details.

A topological space \(X\) is \emph{normal} if for every disjoint pair of closed sets, \(C\) and \(D\), there are disjoint open sets \(U\) and \(V\) such that \(C \subset U\) and \(D\subset V\). Every metric space is normal. The following is a characterization by continuous functions of normal spaces. See~\cite{KelleyTop,MunkresTop} for details.
\begin{teo}[Urysohn's Lemma]\label{Ury}
A topological space \(X\) is normal if and only if for any pair of disjoint closed sets \(C,\ D \subset X\) there exists a continuous function \(h:X \rightarrow [0,1]\) such that \(h(x) = 0\) for every \(x \in C\) and \(h(x)=1\) for every \(x \in D\).
\end{teo}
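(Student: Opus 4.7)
The statement is the classical Urysohn Lemma, so the plan is to reproduce the standard proof, which splits into the easy direction and the substantive direction that gives the lemma its name.

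For the easy direction, assume such a continuous function $h$ exists for every pair of disjoint closed sets $C, D$. Then the preimages $U = h^{-1}([0,1/2))$ and $V = h^{-1}((1/2,1])$ are disjoint open sets with $C \subset U$ and $D \subset V$, which is exactly normality.

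The substantive direction is the construction of $h$ from normality. The plan is to build a family $\{U_r\}_{r \in \mathbb{Q} \cap [0,1]}$ of open sets indexed by dyadic rationals, with the two key properties $C \subset U_0$, $U_1 = X \setminus D$, and the nesting condition $\overline{U_r} \subset U_s$ whenever $r < s$. The construction proceeds by induction on the dyadic level $n$, where at level $n$ we have sets indexed by $k/2^n$ for $0 \le k \le 2^n$. The induction step uses normality to interpolate: given adjacent sets $U_r \subset \overline{U_r} \subset U_s$ at level $n$, apply normality to the disjoint closed sets $\overline{U_r}$ and $X \setminus U_s$ to produce an open $U_t$ at the midpoint $t = (r+s)/2$ with $\overline{U_r} \subset U_t \subset \overline{U_t} \subset U_s$. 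The initial step uses normality once to produce $U_0$ with $C \subset U_0 \subset \overline{U_0} \subset X \setminus D = U_1$.

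Once the family is built, define $h : X \to [0,1]$ by
\[
h(x) = \inf\{ r \in \mathbb{Q} \cap [0,1] : x \in U_r\},
\]
with the convention $\inf \emptyset = 1$. By construction $h(x) = 0$ on $C$ (since $C \subset U_r$ for every $r$) and $h(x) = 1$ on $D$ (since $D \cap U_r = \emptyset$ for $r < 1$). To check continuity, it suffices to verify that preimages of the subbasic open sets of $[0,1]$ are open:
\[
h^{-1}([0,a)) = \bigcup_{r < a} U_r, \qquad h^{-1}((a,1]) = \bigcup_{r > a} (X \setminus \overline{U_r}),
\]
where the nesting condition $\overline{U_r} \subset U_s$ for $r < s$ is what makes the second identity correct (it allows us to replace $X \setminus U_r$ by $X \setminus \overline{U_r}$ without changing the union).

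The main obstacle is the inductive construction of the dyadic family: one must be careful that the interpolation step at every dyadic level preserves the strict nesting $\overline{U_r} \subset U_s$, which is exactly what normality supplies. Everything else — the definition of $h$, the boundary values on $C$ and $D$, and the continuity check — is routine bookkeeping once the family is in place.
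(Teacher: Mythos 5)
Your proof is correct and is the standard textbook argument (the dyadic interpolation via normality, followed by the infimum construction and the subbasic-open-set continuity check). The paper itself does not prove Urysohn's Lemma: it states it as a classical result and cites Kelley and Munkres for the proof, so there is no in-paper argument to compare against. Your construction of the family $\{U_r\}$ indexed by dyadics, the two key identities $h^{-1}([0,a)) = \bigcup_{r<a} U_r$ and $h^{-1}((a,1]) = \bigcup_{r>a}(X\setminus \overline{U_r})$, and the role of the strict nesting $\overline{U_r}\subset U_s$ in justifying the second identity are all handled correctly, and the easy converse direction is also fine. This matches the proof found in the references the paper points to, so there is no discrepancy to flag.
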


\section{Space interchanging sequences}

The following statement is the first ingredient for creating perturbations. It states that given two directions that do not dominate the other we can get, for long enough orbits, a sequence of nearby matrices that change one direction for the other.

\begin{prop}\label{lemaperturbativo}
Let \(M>0\) and \(\e \in (0,1)\). There exists a positive integer \(m_0 = m_0(M,\e) \in \N\) such that if \(m \geq m_0\) then the following property holds: 

Let \(y \in X\) be a nonperiodic point and \(A \in \text{\textup{C}}(X,\mathcal{G})\) such that \(\|A\| < M\) and there exists two nonzero, non colinear vectors \(v_0\) and \(w_0\) such that
\[
\| A^m(y)(v_0) \| < 2\|A^m(y)(w_0)\|. 
\] 
Then there exists a sequence $\{L_0,\ldots,L_{m-1}\}$ of length $m$, such that $\|A(f^j(y)) - L_j\| < \e$ for $0 \leq j \leq m-1$ and\[
L_{m-1}\ldots L_0(v_0) \in \text{\textup{span}}(A^m(y)w_0).
\]
\end{prop}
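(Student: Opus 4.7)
The plan is to restrict the whole argument to the $2$-plane $P = \text{\textup{span}}(v_0, w_0) \subset \R^d$ and its forward iterates $P_j = A^j(y)(P)$; write $T^{(j)} := A^j(y)\restrict{P}: P \to P_j$. I look for perturbations of the form $L_j := R_j A(f^j y)$, where $R_j$ is the rotation in $P_{j+1}$ by a small angle $\beta_j$, extended by the identity on $P_{j+1}^{\perp}$. Such $L_j$ lies in $\mathcal{G}$ (determinant~$1$ rotation) and satisfies $\|L_j - A(f^j y)\| \le M|\beta_j|$, so the task reduces to exhibiting angles $|\beta_j| < \varepsilon/M$ for which the composite $L_{m-1}\cdots L_0$ sends $v_0$ into $\text{\textup{span}}(T^{(m)} w_0)$.

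To organise the rotations, fix a unit-speed parameterisation $\theta \mapsto u(\theta)$ of $\P(P)$ with $u(0) = [v_0]$ and $u(\phi_0) = [w_0]$, where $\phi_0 \in (0,\pi/2]$ is the projective distance between $[v_0]$ and $[w_0]$. For a monotone partition $0 = \theta_0 < \theta_1 < \cdots < \theta_m = \phi_0$, declare the intended direction at step $j$ to be $[V_j] := [T^{(j)} u(\theta_j)] \in \P(P_j)$; by construction $[V_0] = [v_0]$ and $[V_m] = [T^{(m)} w_0]$. The rotation $R_j$ is then forced: it must carry $[A(f^j y) V_j] = [T^{(j+1)} u(\theta_j)]$ into $[V_{j+1}] = [T^{(j+1)} u(\theta_{j+1})]$ inside $\P(P_{j+1})$, with angle $\beta_j$ equal to the projective distance between these two points, itself bounded by the arclength of the curve $\Phi_{j+1}(\theta) := [T^{(j+1)} u(\theta)]$ on $[\theta_j,\theta_{j+1}]$.

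The technical heart of the argument is selecting the partition so that all the $\beta_j$ stay below $\varepsilon/M$, provided $m \ge m_0(M,\varepsilon)$. For this I would use the standard projective speed formula for a $2\times 2$ invertible map,
\[
|\Phi_{j+1}'(\theta)| \;=\; \frac{|\det T^{(j+1)}|}{\|T^{(j+1)} u(\theta)\|^2},
\]
combined with the fact that each $\Phi_{j+1}$ is an injective arc in a circle of length $\pi$, hence of total length at most $\pi$. A greedy construction — take $\theta_{j+1}$ to be the largest $\theta \in [\theta_j,\phi_0]$ with $\int_{\theta_j}^{\theta} |\Phi_{j+1}'| \le \varepsilon/M$ — automatically gives $\beta_j \le \varepsilon/M$, leaving only the question of whether the partition reaches $\phi_0$ within $m$ steps. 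The non-domination hypothesis $\|T^{(m)} v_0\| < 2\|T^{(m)} w_0\|$ enters precisely here: together with $\|A\| < M$ it prevents any $\Phi_{j+1}$ from concentrating its arclength into an arbitrarily short sub-arc around the orbit of $[v_0]$, which is what keeps the greedy step-sizes bounded below and yields the quantitative bound $m_0(M,\varepsilon)$.

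The step I expect to be the main obstacle is exactly this propagation of the endpoint inequality to uniform control at every intermediate time $j < m$. The cleanest route seems to go through the singular value decomposition of $T^{(m)}$: the hypothesis $\|T^{(m)} v_0\| < 2\|T^{(m)} w_0\|$ controls the angular position of $v_0$ and $w_0$ relative to the singular directions of $T^{(m)}$, after which $\|A\| < M$ transports this control backward through the chain rule $T^{(m)} = \bigl(A^{m-j-1}(f^{j+1} y)\restrict{P_{j+1}}\bigr) \circ T^{(j+1)}$. Once the partition is secured and each $\beta_j < \varepsilon/M$, the conclusion $L_{m-1}\cdots L_0 v_0 \in \text{\textup{span}}(T^{(m)} w_0)$ follows directly from the construction; the nonperiodicity of $y$ is used only to ensure that the values $L_0,\ldots,L_{m-1}$ are attached to distinct points of the orbit, as will be needed in the next section when this sequence is assembled into a global perturbation of the cocycle.
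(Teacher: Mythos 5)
Your proposal uses only rotations, whereas the paper combines rotations (only at the start and end) with \emph{hyperbolic} perturbations $H_{j,\e_1}$ along the orbit. That is a genuinely different route, and your writeup honestly flags the obstacle: showing the partition $\theta_0 < \cdots < \theta_m$ actually reaches $\phi_0$ within $m$ steps with each rotation bounded by $\e/M$. But this is precisely the step where the two approaches diverge, and your sketch does not close the gap.

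Here is why it is the real issue and not a technicality. Each greedy step advances a fixed arclength $\e/M$ in the image circle $\P(P_{j+1})$, but the corresponding advance in $\theta$ is governed by the projective speed $|\Phi_{j+1}'(\theta)| = |\det T^{(j+1)}|/\|T^{(j+1)}u(\theta)\|^2$, which near $\theta_j$ can be as large as $M^{2(j+1)}$. Your claim that the hypothesis $\|A^m v_0\| < 2\|A^m w_0\|$ ``prevents $\Phi_{j+1}$ from concentrating its arclength near $[v_0]$'' is not correct: such concentration occurs exactly when $v_0$ is the direction being contracted inside the plane $P$, which is completely compatible with (in fact forced by extreme cases of) that hypothesis. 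So the hypothesis does not yield the speed bound your argument needs, and the singular-value/chain-rule route you propose as a remedy is left entirely unverified — it would require propagating a one-time inequality at time $m$ to uniform control at every intermediate time, a genuinely nontrivial estimate.

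The paper sidesteps this structurally. In its Case 2, the intermediate perturbation is the hyperbolic map $H_{j,\e_1}$, diagonal in the (non-orthonormal) basis $\{v_j, w_j\}$, contracting the $v_j$-component and expanding the $w_j$-component by $(1+\e_1)^{\mp 1}$. Because $A(f^j y)$ carries $v_j \mapsto v_{j+1}$ and $w_j \mapsto w_{j+1}$, these perturbations compose to a $(1+\e_1)^{2j}$ factor on the component ratio \emph{independently of what the ambient dynamics does}, which is what produces an $m_0$ depending only on $M$ and $\e$. Rotations lack this basis-adapted accumulation: they move by a uniform arc in the \emph{image}, so their cumulative effect in $\theta$ is at the mercy of the projective distortion — which $\|A\| < M$ bounds per step but not cumulatively. (The paper's Case 1, a single rotation when $\measuredangle(E_l,F_l)$ drops below a threshold, is the easy situation your scheme trivially handles.) Your final observation, that nonperiodicity of $y$ is not used inside this proposition, is correct; the paper uses it only in the globalisation steps of the next section.
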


\begin{proof}
Let \(A \in \text{\textup{C}}(X,\mathcal{G})\) such that \(\|A\| < M\) as in the hypotheses of the Proposition. For every \(j \geq 0\) denote by \(E_j = A^j(y)(\text{\textup{span}}(v_0))\), $F_j = A^j(y)(\text{\textup{span}}(w_0))$ and write $\alpha_{\e} = \cos^{-1}(\sqrt{1-\e/4})$. We will give an explicit definition for \(m_0 = m_0(M,\e)\) later in the proof. Suppose it is already defined and \(m \geq m_0\). We consider two cases.\\ \emph{First case:} Suppose there exists $l \in \{0,1,\ldots,m-1\}$ such that
\[
\measuredangle(E_l,F_l) = \alpha < \alpha_{\e}.
\]
Take unit vectors $u_E \in E_l$ and $u_F \in F_l$ such that $\measuredangle(u_E,u_F) = \alpha$. Let $V = E_l \oplus F_l$ and $W = V^{\perp}$. Let $\hat{R} : V \rightarrow V$ be the rotation of angle $\alpha$ such that $\hat{R}(u_E) = u_F$. Then $\| \hat{R} - I \| = 2-2\cos(\alpha) < \e$. Let $R: \R^d \rightarrow \R^d$ be such that $R\restrict{V}=\hat{R}$ and $R\restrict{W} = I_{d-2}$.

For each $j \neq l$ we define the sequence  $L_j$ as $L_j = A(f^j(y))$ and $L_l = R$. By construction this sequence maps $v = A^{-l}(f^l(y))(u_E)$ to $w = A^{m-l}(f^l(y))(u_F)$. Then, the sequence \(\{L_0,\ldots,L_{m-1}\}\) satisfies \(\|A(f^j(y))-L_j\| < \e\) for \(0\leq j \leq m-1\) and \(L_{m-1}\ldots L_0(v_0) \in \text{\textup{span}}(A^m(y)w_0)\). The proposition follows.

\emph{Second case:} In what follows we assume that $\measuredangle(E_j,F_j)  \geq \alpha_{\e}$ for every $j = 0, \ldots, m-1$. From the hypotheses there exist two unit vectors $v_0 \in E_0$ and $w_0 \in F_0$ such that $\|A^m(y)(v_0)\| < 2 \| A^m(y)(w_0)\|$. 

We now describe the main idea for constructing the sequence $\{L_0,\ldots,L_{m-1}\}$ and defining \(m_0\). First we rotate $v_0$ a tiny angle in direction to $w_0$, let $\hat{v}_0$ be this new vector. Then along the orbit we make hyperbolic perturbations contracting in the direction of $v_j = A^j(y)(v_0)$ and expanding in the direction of $w_j = A^j(y)(w_0)$. We continue doing so until $\hat{v}_j$, the image of $\hat{v}_0$ under these perturbations, has an angle small enough with $F_j$ and finish with another rotation. We define $m_0\in \N$ big enough that allows us to do this for any $m \geq m_0$.

To simplify notation we consider for every $j$ the basis of $\R^d$ given by the sets $\mathcal{B}_j=\{v_j, w_j, e^1_{j},\ldots e^{d-2}_j\}$ where $e^i_j$ determine an orthonormal basis for \(\text{\textup{span}}(v_j,w_j)^{\perp}\).

Since for every $j$ we have that $\measuredangle(v_j,w_j) \geq \alpha_{\e}$, it follows that the norms of the change of basis matrices from the canonical $\R^d$ basis to $\mathcal{B}_j$, namely \(M_j\) and \(M_j^{-1}\), are bounded. Let $C_{\e}= \max\{ \|M_j\|\|M^{-1}_j\|,\ 0\leq j \leq m-1\}$. 

Let \(\e_1\) and \(\alpha_1\) be such that
\begin{align*}
0< \e_1 &< \frac{\e}{M C_{\e}}, & \tan(\alpha_1) &= \frac{\tan(\alpha_{\e})}{1+\e_1}.  
\end{align*} 

We denote by $H_{j,\e_1}:\R^d \rightarrow \R^d$ the linear transformations such that in the bases $\mathcal{B}_j$ are associated to the matrix
\[
[H_{j,\e_1}]_{\mathcal{B}_j}= \begin{bmatrix}
1/(1+\e_1) & 0 & 0 \\
0 & 1+\e_1 & 0  \\
0 & 0 & I_{d-2}
\end{bmatrix}
\]

We now define the perturbations $L_j$. Denote $Y = \text{\textup{span}}(v_0,w_0)$ and let $\hat{R}_0 : Y \rightarrow Y$ be the rotation of angle $\alpha_1$ in direction from $v_0$ to $w_0$  and let $R_0: \R^d \rightarrow \R^d$ be such that $R_0\restrict{Y}=\hat{R}_0$ and $R_0\restrict{Y^{\perp}} = I_{d-2}.$ This defined in the canonical basis of \(\R^d\). We define $L_0 = A(y)R_0$.

For $0 < j < m-1$ define $L_j = A(f^j(y))H_{j,\e}$. Note that \begin{align*}
\|A(f^j(y)) - L_j\| &\leq \| A\| \| I_d - H_{j,\e}\| \leq \|A\| \| M^{-1}_j\| \| I_d - [H_{j,\e_1}]_{\mathcal{B}_j} \| \|M_j\| \\ &\leq M C_{\e}\e_1 < \e.
\end{align*}
Note that if we denote $\hat{L_j} = L_{j} \circ \ldots \circ L_0$ it holds that
\[
\hat{v}_j = \hat{L_j}(v_0) = \cos(\alpha_1)\left(1+\e_1\right)^{-j}v_j + \sin(\alpha_1)\left(1+\e_1\right)^{j}w_j.
\]
Let $\alpha_{2,j} = \measuredangle(\hat{v}_j,w_j)$ and denote $Y_j = \text{\textup{span}}(v_j,w_j)$. Note that if $R_{2,j}:Y_j \rightarrow  Y_j$ is a rotation of angle $\alpha_{2,j}$ in direction from $v_j$ to $w_j$ it holds that
\begin{multline*}
R_{2,j}(\hat{v}_j) = \left( \cos(\alpha_{2,j})\cos(\alpha_1)\left(1+\e_1\right)^{-j} - \sin(\alpha_{2,j})\sin(\alpha_1)\left(1+\e_1\right)^{j} \right)v_j \\
+ \left( \sin(\alpha_{2,j}) \cos(\alpha_1)\left(1+\e_1\right)^{-j} + \cos(\alpha_{2,j})\sin(\alpha_1)\left(1+\e_1\right)^{j} \right) w_j.
\end{multline*}
Observe that $R_{2,j}(\hat{v}_j)$ is parallel to $w_j$ if and only if
\begin{equation}\label{ecuarrot}
\cos(\alpha_{2,j})\cos(\alpha_1)\left(1+\e_1\right)^{-j} - \sin(\alpha_{2,j})\sin(\alpha_1)\left(1+\e_1\right)^{j} = 0.
\end{equation}

Therefore it suffices to determine $m_0 \in \N$ such that for every $m> m_0$ it holds that $\alpha_2=\alpha_{2,m-2} < \alpha_{\e}$ and $R_{2,m-2}(\hat{v}_{m-2})$ is parallel to $w_{m-2}$. Note that from~\eqref{ecuarrot} the second condition translates to

\[
\frac{1}{\tan(\alpha_2)}\left(\frac{1}{(1+\e_1)^2}\right)^{m-2} = \tan(\alpha_1).
\]

Note that \(\alpha_2 < \alpha_{\e}\) if and only if
\begin{equation}\label{alfa3}
\left(\frac{1}{1+\e_1}\right)^{2m-4} < \tan(\alpha_1) \tan(\alpha_{\e}).
\end{equation}
Recall from \(\alpha_1\) is defined such that \(\tan(\alpha_1) = \tan(\alpha_{\e})/(1+\e_1)\). Therefore equation~\eqref{alfa3} implies
\[
\left(\frac{1}{1+\e_1}\right)^{2m-5} < \tan^2(\alpha_{\e}) = \frac{1 - (1-\e/4)}{(1-\e/4)}.
\]
The right hand equality follows from the definition of \(\alpha_{\e} = \cos^{-1}(\sqrt{1-\e/4})\). This last inequality is equivalent to,
\begin{equation}\label{limite}
1-\frac{\e}{4} < \frac{(1+\e_1)^{2m-5}}{1+(1+\e_1)^{2m - 5}}.
\end{equation}
As \(\e_1 >0\) and \(\e \in (0,1)\) there exists \(m_0 \in \N\) such that for every \(m \geq m_0\), the inequality~\eqref{limite} above is true. It follows that for such \(m\) then \(\alpha_2 = \alpha_{2,m-2} < \alpha_{\e}\).

In sum, by setting $\alpha_2 = \alpha_{2,m-2}$ satisfying the equation~\eqref{ecuarrot} above. 
We define $L_{m-1} = A(f^m(y))R_{\alpha_2}$ being $R_{\alpha_2}$ defined as $R_{\alpha_2, m-2}$ when restricted to the subspace \(Y_{m-2} = \text{\textup{span}}(v_{m-2},w_{m-2})\) 
and $R_{\alpha_2}\restrict{Y^{\perp}}$ as the identity. This finishes the definition of the sequence $\{L_0,\ldots,L_{m-1}\}$. Let $v=v_0$ and the proposition now follows.
\end{proof}

\begin{defi}
A sequence \(\{ L_0,\ldots,L_{m-1}\}\) such that for two different directions \(v_0, w_0\) it holds that \(L_{m-1}\ldots L_0(v_0) \in A^{m-1}(y)(\text{\textup{span}}(w_0))\) and \(\|A(f^j(y))-L_j\| < \e\), as in the previous Proposition, will be called an \emph{interchanging \(\e\)-sequence over \(y\) of length \(m\)}. 
\end{defi}

Recall that \(\reg(A)\) is the set of the regular points For \(x \in \reg(A)\), let \(s \in \{1, \ldots, k(A)\}\) be given by \(\gamma_s(A) = \lambda_p(A)\) and \(\gamma_{s+1}(A) = \lambda_{p+1}(A)\), denote by \(E(x) = \bigoplus_{i=1}^s E_i(x),\) and \(F(x)= \bigoplus_{i=s+1}^{k(A)} E_i(x).\) Then the bundle sum \(E \oplus F\) defines an splitting of index \(p\). 

If \(A\) is not \(m-\)dominated then up to a measure zero set we can assume then that the set $\reg=\reg(A)$ of regular points have eventually no \(m\)-dominated splitting and no periodic points. Recall that by these assumptions every $x \in \reg$ has an iterate $y=f^n(x)$, for some $n\in\Z$ that holds the non \(m-\)domination condition (\ref{nondom})
\[
\frac{\| A^m(y)\restrict{F(y)} \|}{\text{\textup{\textbf{m}}}(A^m(y)\restrict{E(y)})} > \frac{1}{2}.
\] 

The Non \(m-\)domination condition ensures that there are non periodic points \(y \in X\) such that \(\|A^m(y)(v)\| < 2 \| A^m(y)(w)\|\) for non zero vectors \(v \in F\) and \(w \in E\) so we are in the conditions of Proposition~\ref{lemaperturbativo} and we have interchanging \(\e-\)sequences.

For any cocycle \(A\) we define \(\Lambda_i(A) = \lambda_1(A) + \lambda_2(A) + \ldots + \lambda_i(A)\) the sum of the first \(i-\)th Lyapunov exponents.

The following lemma states that if \(E\oplus F\) is not dominated, it is possible to find interchanging \(\e-\)sequences with an apropriate norm. Recall that every Linear transformation \(L:\R^d \rightarrow \R^d\) induces a Linear transformation on the exterior powers \(\wedge^j L: \wedge^j \R^d \rightarrow \wedge^j \R^d\) that maps every decomposable vector \(v_1\wedge \ldots \wedge v_j\) into \(L(v_1)\wedge \ldots \wedge L(v_j)\).

The lemma is a slight modification of~\cite[Proposition~4.2]{BochiViana_2005}, but the proof in~\cite{BochiViana_2005} also works in our setting. We will include a proof for the reader's convenience.
 
\begin{lema}\label{estimacionnorma}
Let \(M>0,\ \e \in (0,1),\ \delta > 0\). Then for every $m \in \N$ such that $m \geq m_0(M,\e)$  and \(A \in \text{\textup{C}}(X,\mathcal{G})\) such that \(\| A\| < M\) and the Oseledets splitting of index \(p\) \(E\oplus F\) is not an \(m-\)dominated splitting of index \(p\). Then there exists a measurable function $N_A: \reg \rightarrow \N$ with the following properties: For almost every $x \in \reg$ and every $n \geq N_A(x)$ there exists an interchanging $\e$-sequence $\{\hat{L}_0^{(x,n)},\ldots \hat{L}_{n-1}^{(x,n)}\}$ over $x$ of length $n$ such that
\[
 \frac{1}{n}\log \|\wedge^p (\hat{L}_{n-1}^{(x,n)},\ldots \hat{L}_0^{(x,n)}) \| \leq \frac{\Lambda_{p-1}(A) + \Lambda_{p+1}(A)}{2} + \delta.
\] 
\end{lema}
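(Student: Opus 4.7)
The plan is to splice the interchanging sequence produced by Proposition~\ref{lemaperturbativo} into a long orbit of the cocycle $A$, positioning the interchange block near the midpoint so that the exterior-power growth is effectively halved between the two sides. First, because $A$ is not $m$-dominated, the set $B = \{y \in X : \|A^m(y)|_{F(y)}\| > \tfrac{1}{2}\mathbf{m}(A^m(y)|_{E(y)})\}$ has positive $\mu$-measure: otherwise $\bigcap_{k\in\Z} f^{-k}(X\setminus B)$ would be a full-measure $m$-dominated invariant set, contradicting the hypothesis. Applying Birkhoff's Theorem to $\mathbf{1}_B$ together with ergodicity yields a full-measure set $\mathcal{R}'\subset \reg$ on which the visit density to $B$ equals $\mu(B)>0$; in particular, for any $\eta>0$ and each $x\in\mathcal{R}'$, all sufficiently large $n$ admit some $\ell\in[n/2-\eta n,\,n/2+\eta n]$ with $f^\ell(x)\in B$. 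Simultaneously, Oseledets' Theorem and Furstenberg--Kesten, applied to $A$, to $\wedge^p A$, and to their restrictions to the invariant blocks $E$ and $F$, give rate estimates on both halves of the orbit to precision $\eta$ from some time $N_A(x)$ onward, as well as subexponential decay of the angle $\measuredangle(E(f^j x),F(f^j x))$.

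Given $n\geq N_A(x)$, pick such an $\ell$, set $y=f^\ell x$, and choose unit vectors $v_0\in E(y)$, $w_0\in F(y)$ realizing $\|A^m(y)v_0\|=\mathbf{m}(A^m(y)|_{E(y)})$ and $\|A^m(y)w_0\|=\|A^m(y)|_{F(y)}\|$; the defining inequality of $B$ is exactly the hypothesis $\|A^m(y)v_0\|<2\|A^m(y)w_0\|$ of Proposition~\ref{lemaperturbativo}. For $m\geq m_0(M,\varepsilon)$ this produces an interchanging $\varepsilon$-sequence $\{L_0,\dots,L_{m-1}\}$ at $y$ sending $v_0$ into $\mathrm{span}(A^m(y)w_0)\subset F(f^{\ell+m}x)$. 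Define $\hat L^{(x,n)}_j=A(f^j x)$ for $j\in[0,\ell)\cup[\ell+m,n)$ and $\hat L^{(x,n)}_{\ell+i}=L_i$ for $0\leq i<m$; this is an interchanging $\varepsilon$-sequence of length $n$ over $x$ by construction.

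For the norm bound, decompose $\hat L^{(x,n)}_{n-1}\cdots \hat L^{(x,n)}_0 = A^{n-\ell-m}(f^{\ell+m}x)\cdot I\cdot A^\ell(x)$ with $I=L_{m-1}\cdots L_0$. The crucial structural fact is that $I(E(y))$ is a $p$-plane containing one direction in $F(f^{\ell+m}x)$ (the image of $v_0$), so $\wedge^p I(\wedge^p E(y))$ lies in the sum $\bigoplus_{j\geq 1}\wedge^{p-j}E\otimes\wedge^j F$ at $f^{\ell+m}x$ whose top Oseledets exponent for $\wedge^p A$ is $\Lambda_{p-1}(A)+\lambda_{p+1}(A)$, strictly below the top exponent $\Lambda_p(A)$ of $\wedge^p E$. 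Combining (i) dominance of $\wedge^p E(f^\ell x)$ for $\wedge^p A^\ell(x)$, so that up to subexponential angle corrections all the growth $\exp(\ell\Lambda_p(A))$ is concentrated there; (ii) the uniform bound $\|\wedge^p I\|\leq (M+\varepsilon)^{mp}$ on the middle block; and (iii) growth at rate $\Lambda_{p-1}(A)+\lambda_{p+1}(A)$ for $\wedge^p A^{n-\ell-m}(f^{\ell+m}x)$ on the post-interchange subspace, one arrives at
\[
\tfrac{1}{n}\log\|\wedge^p \hat L^{(x,n)}_{n-1}\cdots \hat L^{(x,n)}_0\| \leq \tfrac{\ell}{n}\Lambda_p(A) + \tfrac{n-\ell-m}{n}\bigl(\Lambda_{p-1}(A)+\lambda_{p+1}(A)\bigr) + O(\eta) + o(1).
\]
Since $\ell/n\to 1/2$, the right-hand side tends to $\tfrac{1}{2}\Lambda_p(A)+\tfrac{1}{2}(\Lambda_{p-1}(A)+\lambda_{p+1}(A))=(\Lambda_{p-1}(A)+\Lambda_{p+1}(A))/2$, which is at most $(\Lambda_{p-1}(A)+\Lambda_{p+1}(A))/2+\delta$ provided $\eta$ is small and $n$ is large; this fixes the choice of $N_A(x)$.

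The main obstacle is the norm estimate in the third paragraph. One must verify that for \emph{every} input $p$-vector (not just those supported on $\wedge^p E(x)$) the combined effect of $\wedge^p A^\ell(x)$, the bounded interchange factor $\wedge^p I$, and $\wedge^p A^{n-\ell-m}(f^{\ell+m}x)$ does not resurrect the top rate $\Lambda_p(A)$: components transverse to $\wedge^p E(x)$ start subdominant by the gap $\lambda_p-\lambda_{p+1}$ per step, but under $\wedge^p I$ they may acquire a small $\wedge^p E(f^{\ell+m}x)$ component which is then amplified at full rate by the remaining orbit. The midpoint choice $\ell\approx n/2$ is exactly what forces these two competing terms, $\ell\Lambda_p+(n-\ell)(\Lambda_{p-1}+\lambda_{p+1})$ and $\ell(\Lambda_p-(\lambda_p-\lambda_{p+1}))+(n-\ell)\Lambda_p$, to balance at the common value $n(\Lambda_{p-1}+\Lambda_{p+1})/2$, and making this quantitative is where the subexponential angle control from Oseledets is essential.
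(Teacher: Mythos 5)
Your overall plan is the same as the paper's: splice the interchange near the midpoint, use the structural fact that the interchanged $p$-plane has no component in the top Oseledets line of $\wedge^p A$ (the paper's Claim 1), and balance the two halves. You have correctly isolated the key ideas. However, there are two concrete gaps.

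First, your norm estimate on the second half of the orbit is not justified as stated. You invoke ``rate estimates $\ldots$ from some time $N_A(x)$ onward,'' but the second block $\wedge^p A^{n-\ell-m}(f^{\ell+m}x)$ is based at the point $f^{\ell+m}(x)$, which moves with $n$. Oseledets gives a waiting time $Q(\cdot)$ depending on the base point, and $Q(f^{\ell+m}(x))$ can grow unboundedly as $n\to\infty$; a single $N_A(x)$ does not control it. For the $V$-block this can be salvaged by a ratio trick (one-dimensionality of $V$), but for the $H$-block it cannot. The paper handles this by choosing $q$ with $\mu(C\cap f^{-m}(B_q))>0$ where $B_q=\{Q\leq q\}$, and then only using interchange times $\ell$ with $f^{\ell}(x)\in C\cap f^{-m}(B_q)$; this forces $Q(f^{\ell+m}x)\leq q$ uniformly in $n$. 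That restriction (and the associated recurrence lemma, rather than bare Birkhoff) is an essential ingredient you are missing.

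Second, the step you yourself flag as the ``main obstacle''---propagating the estimate from $\wedge^p E(x)$ to all input $p$-vectors and controlling what happens when the middle factor mixes $V$ with $H$---is exactly the content of the paper's block-matrix argument. With $\mathcal{A}_0,\mathcal{A}_1$ block-diagonal in the $V\oplus H$ decomposition and the middle block satisfying $\mathcal{L}^{++}=0$, the product has a $2\times 2$ block form whose entries all involve one $++$ factor and one $--$ factor (or two $--$), and each is bounded by $\tfrac{1}{2}n(\lambda_1^{\wedge p}+\lambda_2^{\wedge p})$ up to $O(n\delta)$; the comparison between $\|\cdot\|$ and $\|\cdot\|_{\max}$ is controlled by the subexponential angle decay from Oseledets (the paper's Lemma~\ref{vinculanormas}). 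Naming this as an obstacle without carrying it out leaves the proof incomplete; the block decomposition is precisely the mechanism that makes the midpoint balancing quantitative, and it has to be done.
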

\begin{rem}
The function $N_A$ depends on the cocycle $A$, $\e$, $\delta$ and $m$.
\end{rem}

Before giving a proof we need to give some preliminaries. First, we introduce the following norm. We represent a linear map \(T:\R^d \rightarrow \R^d\) by the following associated matrix
\begin{equation}\label{bloques}
T = \left[ \begin{array}{cc}
T^{++} & T^{+-} \\
T^{-+} & T^{--} \end{array}\right] 
\end{equation}
with respect to two splittings $\R^d = E_0 \oplus F_0$ and $\R^d = E_1 \oplus F_1$.
We then define the norm
\[
\| T\|_{\max} = \max \left\{ \| T^{++} \|,\|T^{+-}\|,\|T^{-+}\|,\|T^{--}\| \right\}.
\]

The relation between this norm and the usual operator norm is established in the following lemma

\begin{lema}\label{vinculanormas}
Let $\theta_0 = \measuredangle(E_0,F_0)$ and $\theta_1 = \measuredangle(E_1,F_1)$. Then:
\begin{enumerate}
\item $\| T \| \leq 4(\sin \theta_0)^{-1} \| T \|_{\max}$;
\item $ \|T\|_{\max} \leq (\sin \theta_1)^{-1} \| T \|.$
\end{enumerate}

\end{lema}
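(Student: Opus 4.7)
The plan is to reduce both inequalities to one elementary geometric fact: if $\R^d = E \oplus F$ with $\measuredangle(E,F) = \theta$, then every decomposition $v = v_E + v_F$ with $v_E \in E$, $v_F \in F$ satisfies $\max(\|v_E\|, \|v_F\|) \leq (\sin\theta)^{-1}\|v\|$. This holds because $\|v\|$ is at least the distance from $v_E$ to the subspace $F$, which by definition of the angle between subspaces is at least $\|v_E\|\sin\theta$; the bound on $\|v_F\|$ is symmetric. I would state and verify this preliminary at the start and then apply it once in each direction.

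For part (1), I would take an arbitrary $v \in \R^d$ and decompose $v = v^+ + v^-$ along $E_0 \oplus F_0$, so by the preliminary $\|v^{\pm}\| \leq (\sin\theta_0)^{-1}\|v\|$. Expanding $Tv$ using the block form and applying the triangle inequality gives
\[
\|Tv\| \leq \|T^{++}\|\|v^+\| + \|T^{+-}\|\|v^-\| + \|T^{-+}\|\|v^+\| + \|T^{--}\|\|v^-\| \leq 2\|T\|_{\max}\bigl(\|v^+\| + \|v^-\|\bigr).
\]
Combining with the preliminary yields $\|Tv\| \leq 4(\sin\theta_0)^{-1}\|T\|_{\max}\|v\|$, which is (1).

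For part (2), I would bound each of the four blocks individually. For $T^{++}\colon E_0 \to E_1$ and any unit vector $v \in E_0$, the image $Tv$ decomposes in $E_1 \oplus F_1$ precisely as $T^{++}v + T^{-+}v$; the preliminary applied in the codomain with angle $\theta_1$ gives $\|T^{++}v\| \leq (\sin\theta_1)^{-1}\|Tv\| \leq (\sin\theta_1)^{-1}\|T\|$. The blocks $T^{-+}$, $T^{+-}$, $T^{--}$ are handled by the identical argument, alternating between taking unit test vectors in $E_0$ or $F_0$ and projecting the image onto $E_1$ or $F_1$. Taking the maximum over the four blocks yields (2).

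There is no real obstacle here; the whole argument is elementary linear algebra. The only points requiring care are keeping the roles of $\theta_0$ and $\theta_1$ straight (the domain angle controls decompositions of input vectors, the codomain angle controls projections of output vectors) and noting that the constant $4$ in (1) arises as a product of two factors of $2$, one from bounding pairs of blocks with the same source subspace and one from summing $\|v^+\| + \|v^-\|$.
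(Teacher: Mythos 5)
Your proof is correct and takes essentially the same route as the paper: both proofs decompose the input vector along $E_0\oplus F_0$, bound the components by $(\sin\theta_0)^{-1}\|v\|$, apply the triangle inequality to the four blocks for part (1), and for part (2) project the image of a test vector onto $E_1$, $F_1$ using the codomain angle. The only cosmetic difference is that you extract the geometric fact as a preliminary and justify it via distance-to-a-subspace, whereas the paper cites the law of sines directly; these are the same estimate.
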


\begin{proof}

Let $v = v_0 + u_0 \in E_0 \oplus F_0$. From the Sine Theorem it follows that $\|v_0\| \leq \|v\| / \sin \theta_0$ and the same inequality holds for $\|u_0\|$. Thus,
\[
\|Tv \| \leq \|T^{++}v_0\| + \|T^{+-}u_0\| + \| T^{-+}v_0\| + \| T^{--}u_0\| \leq 4 \|T\|_{\max}\|v\| / \sin \theta_0.
\]
This proves the first inequality.

For the second inequality let $v_0 \in E_0$. Its image splits as $Tv_0 = T^{++}v_0 + T^{-+}v_0 \in E_1 \oplus F_1$. Once again from Sine Theorem it follows that,
\[
\|T^{*+}v_0 \| \leq \|Tv_0\|(\sin \theta_1)^{-1} \leq \| T \| \|v_0\| (\sin \theta_1)^{-1}
\]
for $* = +$ and $* = -$. Together with the corresponding estimate for $T^{*-}u_0$, $u_0 \in F_0$ the second inequality follows.
\end{proof}

We also need the following recurrence result from~\cite[Lemma 3.12]{BOCHI_2002a}:

\begin{lema}\label{lemarecurrenciabochi}
Let $f:X \rightarrow X$ be a measurable homeomorphism. Let $C \subset X$ be a measurable set with $\mu(C)>0$ and let $\Gamma= \bigcup_{n\in\Z}f^n(C)$. Fix any $\gamma > 0$. Then there exists a measurable function $N_0:\Gamma \rightarrow \N$ such that for almost every $x \in \Gamma$, and for all $n \geq N_0(x)$ and $t \in (0,1)$, there exists $\ell \in \{0,1,\ldots, n\}$ such that $t - \gamma \leq \ell/n \leq t + \gamma$ and $f^{\ell}(x) \in C$.
\end{lema}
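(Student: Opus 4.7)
The plan is to deduce the lemma from Birkhoff's ergodic theorem (Theorem~\ref{birkhoff}) applied to $\chi_C$, combined with a simple window-counting estimate that rules out long gaps between visits to $C$. The statement does not assume ergodicity, so I would handle possible non-ergodicity via the ergodic decomposition only to establish positivity of the asymptotic visit frequency.

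First, by Birkhoff's theorem the limit
\[
\tau_C(x) := \lim_{k\to\infty}\frac{1}{k}\,\#\{0\le j<k : f^j(x)\in C\}
\]
exists on an $f$-invariant, full-measure set, defines a measurable $f$-invariant function, and satisfies $\int\tau_C\,d\mu=\mu(C)$. I would then show $\tau_C>0$ almost everywhere on $\Gamma$: since $\Gamma$ is $f$-invariant, every ergodic component $\mu_\alpha$ of $\mu$ with $\mu_\alpha(\Gamma)>0$ satisfies $\mu_\alpha(\Gamma)=1$, and then $f$-invariance combined with $\Gamma=\bigcup_n f^n(C)$ forces $\mu_\alpha(C)>0$; Birkhoff on $\mu_\alpha$ gives $\tau_C\equiv\mu_\alpha(C)>0$ a.e. on its support, so indeed $\tau_C>0$ a.e.\ on $\Gamma$.

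Second, for each Birkhoff-generic $x\in\Gamma$ set $\alpha=\alpha(x):=\tau_C(x)>0$ and fix $\eta:=\alpha\gamma/4$. Define $N_0(x)$ to be the smallest integer such that
\[
\Bigl|\tfrac{1}{k}\#\{0\le j<k : f^j(x)\in C\}-\alpha\Bigr|<\eta\quad\text{for all }k\ge N_0(x);
\]
this is measurable by standard arguments on countable intersections of measurable sets. Then set $N_0(x):=\lceil 4N_0(x)/(\alpha\gamma)\rceil$ (abusing notation once to enlarge $N_0$). For $n\ge N_0(x)$ and $t\in(0,1)$, consider the window of integers $W(t,n)=\{\ell\in[0,n] : t-\gamma\le \ell/n\le t+\gamma\}$. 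Because $t\in(0,1)$ and at most one endpoint of the real interval $[(t-\gamma)n,(t+\gamma)n]$ falls outside $[0,n]$, $W(t,n)$ has length at least $\gamma n$; let $a=\min W$ and $b=\max W$.

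Third, I would show $W(t,n)$ meets the visit set $\{j : f^j(x)\in C\}$ using the identity
\[
\#\{a\le j\le b : f^j(x)\in C\}=N(x,0,b+1)-N(x,0,a),
\]
where $N(x,0,k):=\#\{0\le j<k : f^j(x)\in C\}$. If $a\ge N_0(x)$, Birkhoff bounds both terms and yields $\#\ge\alpha(b+1-a)-2\eta n\ge\alpha\gamma n-2\eta n>0$ by the choice $\eta=\alpha\gamma/4$. If $a<N_0(x)$, the enlargement of $N_0(x)$ forces $b\ge\gamma n\ge N_0(x)$, and I replace $N(x,0,a)$ by the crude bound $N(x,0,a)\le a<N_0(x)$, which still gives $\#\ge(\alpha-\eta)(b+1)-N_0(x)>0$ for $n\ge N_0(x)$. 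Either way $W(t,n)$ contains some $\ell$ with $f^\ell(x)\in C$ and $\ell/n\in[t-\gamma,t+\gamma]$.

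The main technical point is precisely the endpoint case (when $t$ is within $\gamma$ of $0$ or $1$), where the window is truncated and two-sided Birkhoff control is unavailable; the crude replacement $N(x,0,a)\le a$ compensates for this at the cost of enlarging the threshold $N_0(x)$ by a factor of order $1/(\alpha(x)\gamma)$. Measurability of the resulting function $N_0:\Gamma\to\mathbb{N}$ follows from that of $\tau_C$ and of the partial counts $N(x,0,k)$.
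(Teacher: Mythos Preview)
The paper does not prove this lemma; it is quoted from \cite[Lemma~3.12]{BOCHI_2002a} and used as a black box, so there is no in-paper argument to compare against. Your strategy---Birkhoff's theorem to obtain a positive visit frequency $\alpha(x)=\tau_C(x)>0$ on $\Gamma$, followed by a window-counting estimate ruling out gaps of length $\gamma n$---is the standard one and is correct in outline.

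There is, however, a genuine slip in your Case~2 caused by the notation abuse. You first define $N_0(x)$ as the Birkhoff threshold (call it $N_1$), then overwrite it with the enlarged value $N_0=\lceil 4N_1/(\alpha\gamma)\rceil$, and then split cases at the \emph{enlarged} $N_0$. In the branch $a<N_0$ you bound $N(x,0,a)\le a<N_0$ and claim $(\alpha-\eta)(b+1)-N_0>0$. This fails: from the window length you only get $b+1\ge\gamma n$, so $(\alpha-\eta)(b+1)\ge(\alpha-\eta)\gamma n\le\gamma n$, and since you merely assume $n\ge N_0$ you would need $(\alpha-\eta)\gamma\ge 1$, which is impossible for $\alpha\le 1$ and $\gamma<1$. (Your intermediate claim $\gamma n\ge N_0$ is likewise false unless $\gamma\ge 1$.) The fix is to split cases at the \emph{original} Birkhoff threshold $N_1$: if $a<N_1$ then $N(x,0,a)<N_1\le\alpha\gamma n/4$ by the enlargement $n\ge N_0$, while $b+1\ge\gamma n\ge\gamma N_0=4N_1/\alpha\ge N_1$ so Birkhoff still controls $N(x,0,b+1)$, and the resulting lower bound $(\alpha-\eta)\gamma n-\alpha\gamma n/4$ is positive. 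With this correction (and a sentence handling the trivial case $\gamma\ge 1$), your proof is complete.
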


We are now ready to give a proof for Lemma~\ref{estimacionnorma}
\begin{proof}[Proof of Lemma~\ref{estimacionnorma}] 
Fix $M,\ \e$ and $\delta$, and \(A\) such that \(\|A\|< M\). Following~\cite{BochiViana_2005} we divide the proof in two parts for the sake of clarity.
\smallskip

\noindent
\emph{Part 1: Definition of $N_A(\cdot)$ and the sequence $\hat{L}_j^{(x,n)}$}.
\medskip

Since $m \in \N $ is greater than $m_0$ the conclusion of Proposition~\ref{lemaperturbativo} holds true. Let $C \subset \reg$ be the set of points such that the nondomination condition holds (see equation~\eqref{nondom}). Since every point $x \in \reg$ has no dominated splitting, we can rewrite $\reg$ as follows: 
\begin{equation}\label{41}
\reg = \bigcup_{n \in \Z} f^n(C).
\end{equation}

Let \(\lambda_i^{\wedge p}(A),\ 1\leq i \leq \binom{d}{p}\), denote the Lyapunov exponents of the cocycle \(\wedge^p (A)\) over \(f\), in nonincreasing order. Recall that
\begin{gather*}
\lambda_1^{\wedge p}(A) = \lambda_1(A) + \ldots + \lambda_{p-1}(A) + \lambda_p(A), \\
\lambda_2^{\wedge p}(A) = \lambda_1(A) + \ldots + \lambda_{p-1}(A) + \lambda_{p+1}(A).
\end{gather*}
Since \(\lambda_p(A) > \lambda_{p+1}(A)\) then \(\lambda_1^{\wedge p}(A) > \lambda_2^{\wedge }(A)\), this gives us a splitting \(\wedge^p \R^d = V \oplus H\) where \(V\) denotes the Oseledets subspace asociated to the upper exponent and \(H\) the sum of all other Oseledets subspaces. Note that \(V\) is one-dimensional.
 
For $x \in \reg$ the Oseledets' Theorem~\ref{oseledets} gives $Q(x) \in \N$ such that for every $n \geq Q(x)$ we have:
\begin{enumerate}
\item $\frac{1}{n} \log \frac{\| \wedge^p A^n(x)v\|}{\|v\|} < \lambda_1^{\wedge p}(A) + \delta$ for every $v \in V \setminus \{0\}$;
\item $\frac{1}{n} \log \frac{\| \wedge^p A^n(x)w\|}{\|w\|} < \lambda_2^{\wedge p}(A) + \delta$ for every $w \in H \setminus \{0\}$;
\item $\frac{1}{n} \log \sin \measuredangle(V(f^n x),H(f^n x)) > -\delta$.
\end{enumerate}

For $q \in \N$, let $B_q = \{ x \in \reg;\ Q(x) \leq q\}$. Then $B_q$ form a nondecreasing sequence and their union is $\reg$. Let \(q\) be the first positive integer such that \(\mu(C\cap f^{-m}(B_q)) > 0\), from ergodicity the set
\begin{equation}\label{42}
C_q = \bigcup_{n\in \Z} f^n(C \cap f^{-m}(B_q))
\end{equation}
is equal to $\reg$ almost everywhere. We are defining the funcion $N_B$ in the set \(C_q\).

Let $c$ be a strict upper bound for $\log \|\wedge^p A \|$ and $\gamma = \min\{c^{-1}\delta, 1/10\}$. Using the sets~\eqref{42} and Lemma~\ref{lemarecurrenciabochi}, we find a measurable function $N_0:C_q\rightarrow \N$ such that for almost every $x \in C_q$, every $n \geq N_0(x)$ and every $t \in (0,1)$ there is $\ell \in \{0,1,\ldots,n\}$ with $|\ell / n - t | < \gamma$ and $f^{\ell}(x) \in C \cap f^{-m}(B_q)$. We define $N_B(x)$ for $x \in C_q$ as the least integer such that
\[
N_B(x) \geq \max\left\{ N_0(x),\  10Q(x),\ m\gamma^{-1},\ \delta^{-1}\log [4/\sin \measuredangle(E_x, F_x)] \right\}.
\]
Fix a point $x \in C_q$ and $n \geq N(x)$. We construct the interchanging $\e$-sequence $\{\hat{L}_j^{(x,n)}\}.$ As $n \geq N_0$, there exists $\ell \in \N$ such that
\[
\left| \frac{\ell}{n} - \frac{1}{2} \right| < \gamma \ \text{ and }\ y = f^{\ell}(x) \in C \cap f^{-m}(B_q).
\]
Since $y \in C$, where the nondomination condition holds, Proposition~\ref{lemaperturbativo} gives an interchanging $\e$-sequence $\{L_0,\ldots, L_{m-1}\}$ such that there are nonzero vectors $v_0 \in E_y$ and $w_0 \in F_{f^{m}(y)}$ for which
\begin{equation}\label{intercambio}
L_{m-1}\ldots L_0(v_0) = w_0.
\end{equation}
We form the interchanging $\e$-sequence $\{\hat{L}_0^{(x,n)},\ldots \hat{L}_{n-1}^{(x,n)}\}$ of length $n$ by concatenating
\[
\{A(f^i(x));\ 0 \leq i  < \ell\},\ \{ L_0, \ldots, L_{m-1}\},\ \{A(f^i(x));\ \ell + m \leq i < n\}.
\]
By construction this is an interchanging $\e$-sequence over $x$ of length $n$.
\vspace{\baselineskip}

\noindent
\emph{Part 2. Estimation of $\|\wedge^p(\hat{L}_{n-1}^{(x,n)}\ldots\hat{L}_{0}^{(x,n)}) \|$.}
\medskip

To simplify notation, write \(\wedge^p(\hat{L}_{n-1}^{(x,n)}\ldots\hat{L}_{0}^{(x,n)}) = \mathcal{A}_1\mathcal{L}\mathcal{A}_0\), with \(\mathcal{A}_0 = \wedge^p A^{\ell-1}(x)\), \(\mathcal{A}_1 = \wedge^p A^{n-\ell - m}(f^{\ell + m}(x))\) and \(\mathcal{L} = \wedge^p(L_{m-1}\ldots L_0)\). 

\begin{claim}
The map \(\mathcal{L}:\wedge^p(\R^d) \rightarrow \wedge^p (\R^d)\) satisfies \(\mathcal{L}(V(y)) \subset H(f^m(y)).\)
\end{claim}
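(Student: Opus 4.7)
The plan is to exploit the explicit description of $V(z)$ and $H(z)$ at any regular point $z$ in terms of the finer Oseledets splitting of $\R^d$.

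First, I would note that, because $\dim E(z) = p$, the one-dimensional space $V(z)$ is nothing other than $\wedge^p E(z)$: this is the unique Oseledets subspace of $\wedge^p A$ whose characteristic exponent equals $\lambda_1(A)+\cdots+\lambda_p(A)$, and it is generated by any nonzero decomposable $p$-vector built from a basis of $E(z)$. Correspondingly, $H(z)$ is the sum of the remaining Oseledets subspaces of $\wedge^p A$, which are spanned by wedges $e_{i_1}\wedge\cdots\wedge e_{i_p}$ with each $e_{i_j}$ belonging to some $E_{\sigma(j)}(z)$, where at least one $\sigma(j)>s$. Equivalently, $H(z)$ is the span of all decomposable $p$-vectors having at least one factor in $F(z)$, i.e.
\[
H(z)=F(z)\wedge \wedge^{p-1}\R^d,
\]
and this gives the direct sum decomposition $\wedge^p\R^d=V(z)\oplus H(z)$.

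Next, I would fix a basis $v_0,v_1,\ldots,v_{p-1}$ of $E(y)$, where $v_0$ is the vector produced by Proposition~\ref{lemaperturbativo}, so that $Lv_0=L_{m-1}\cdots L_0(v_0)=w_0\in F_{f^m(y)}$. The generator $v_0\wedge v_1\wedge\cdots\wedge v_{p-1}$ spans $V(y)$, and applying $\mathcal{L}=\wedge^p(L_{m-1}\cdots L_0)$ yields
\[
\mathcal{L}(v_0\wedge v_1\wedge\cdots\wedge v_{p-1})=w_0\wedge (Lv_1)\wedge\cdots\wedge(Lv_{p-1}),
\]
which, since its first factor $w_0$ already lies in $F_{f^m(y)}$, belongs to $F_{f^m(y)}\wedge\wedge^{p-1}\R^d=H(f^m(y))$. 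Decomposing each $Lv_i=a_i+b_i$ with $a_i\in E_{f^m(y)}$ and $b_i\in F_{f^m(y)}$ and multilinearly expanding, every resulting term carries the factor $w_0\in F_{f^m(y)}$, confirming the containment.

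I expect no serious obstacle here: the claim is essentially a linear-algebra translation of the fact that the interchanging sequence moves a vector of $E(y)$ into $F(f^m(y))$, together with the structural description of $V$ and $H$. The one point requiring some care is the identification $H(z)=F(z)\wedge\wedge^{p-1}\R^d$, which must be justified by combining the sum-of-exponents indexing of Oseledets subspaces of $\wedge^p A$ with the hypothesis $\lambda_p(A)>\lambda_{p+1}(A)$ (ensuring the top Oseledets subspace is exactly $\wedge^p E(z)$).
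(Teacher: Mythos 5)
Your proof is correct and follows essentially the same route as the paper: both take $v_0$ as the first vector of a basis of $E(y)$, write $V(y)$ as the span of the wedge of that basis, apply $\mathcal{L}$, and use that the resulting first factor $w_0$ lies in $F(f^m y)$. The only cosmetic difference is that you package the final step as the identity $H(z)=F(z)\wedge\wedge^{p-1}\R^d$, whereas the paper reaches the same conclusion by expanding in the adapted Oseledets basis and noting that $e_1\wedge\cdots\wedge e_p$ cannot occur.
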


\begin{proof}[Proof of Claim 1]
Let \(x \in \mathcal{R}\) and consider a basis \(\{e_1(x), \ldots, e_d(x)\}\) of \(\R^d\) such that
\[
e_i(x)\in E_j(x) \text{ for } \dim E_1(x) + \cdots + \dim E_{j-1}(x) < i \leq \dim E_1(x) + \cdots + \dim E_j(x).
\]
The \(V(x)\) is generated by \(e_1(z)\wedge \cdots \wedge e_p(x)\) and \(H(x)\) is generated by the vectors \(e_{i_1}(x) \wedge \cdots \wedge e_{i_p}(x)\) with \(1 \leq i_1 < \cdots < i_p \leq d,\ i_p > p.\) Notice as well that \(\{e_1(x),\ldots,e_p(x)\}\) and \(\{e_{p+1}(x),\ldots,e_d(x\}\}\) are bases for \(E(x)\) and \(F(x)\) respectively.

Consider the vectors \(v_0 \in E(y)\) and \(w_0 = L(v_0) \in F(f^m y)\) where \(L = L_{m-1}\ldots L_0.\) There is \(j \in \{1, \ldots, p\}\) such that
\[
\{ v_0, e_1(y),\ldots,e_{j-1}(y),e_{j+1}(y),\ldots,e_p(y)\}
\]
is a basis for \(E(y)\). Therefore \(V(y)\) is generated by the vector
\[
v_0 \wedge e_1(y)\wedge \cdots \wedge e_{j-1}(y) \wedge e_{j+1}(y)\wedge \cdots e_p(y),
\]
whose image by \(\mathcal{L} = \wedge^p(L)\) is
\begin{equation}\label{mapexterior}
w_0 \wedge L(e_1(y)) \wedge \cdots \wedge L(e_{j-1}(y)) \wedge L(e_{j+1}(y)) \wedge \cdots L(e_p(y)).
\end{equation}
As \(w_0\) is a linear combination of vectors \(e_{p+1}(f^m(y)),\ldots,e_d(f^m(y))\) and each \(L(e_i(y)\) is a linear combination of vectors \(e_1(f^m(y),\ldots, e_d(f^m(y))\). Substitution in~\eqref{mapexterior} gets us a linear combination of \(e_{i_1}(f^m(y))\wedge \cdots \wedge e_{i_p}(f^m(y))\) where \(w_0\) ensures us that \(e_1(f^m(y)\wedge \cdots \wedge e_p(f^m(y))\) does not appear. This shows the vector~\eqref{mapexterior} belongs to \(H(f^m(y))\) and the claim follows.
\end{proof}

This claim provides us the key observation that \(\mathcal{L}(V(f^{\ell}x)) \subset H((f^{\ell + m}x))\).
Following the blocks notations introduced at equation~\eqref{bloques}, the associated matrices to these cocycles have the form:
\[
\mathcal{A}_i = \left[ \begin{array}{cc} \mathcal{A}_i^{++} & 0 \\ 0 & \mathcal{A}_i^{--}\end{array}\right],\ i = 0,1,\ \text{ and } \mathcal{L} = \left[ \begin{array}{cc} 0 & \mathcal{L}^{+-} \\ \mathcal{L}^{-+} & \mathcal{L}^{--}\end{array}\right]:
\]
$\mathcal{A}_i^{+-} = 0 = \mathcal{A}_i^{-+}$ since the sub-bundles $V$ and $H$ are $\wedge^p A$-invariant, and $\mathcal{L}^{++}=0$ due to Claim 1. Then 
\begin{equation}\label{paraestimar}
\hat{L}_{n-1}\ldots\hat{L_0} = \left[\begin{array}{cc} 0 & \mathcal{A}_1^{++}\mathcal{L}^{+-}\mathcal{A}_0^{--} \\ \mathcal{A}_1^{--}\mathcal{L}^{-+}\mathcal{A}_0^{++} & \mathcal{A}_1^{--}\mathcal{L}^{--}\mathcal{A}_0^{--} \end{array} \right].
\end{equation}

\begin{claim}
For $i = 0, 1$, $x \in C_q\setminus C_{q-1}$ and $n \geq N(x)$,
\[
\log \|\mathcal{A}_i^{++} \| < \frac{1}{2} n(\lambda_1^{\wedge p}(A) + 5\delta)\ \text{ and }\ \log \| \mathcal{A}_i^{--}\| < \frac{1}{2}n(\lambda_2^{\wedge p}(A) + 5 \delta).\]
\end{claim}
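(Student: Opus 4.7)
The plan is to observe that $\mathcal{A}_0$ and $\mathcal{A}_1$ are iterates of $\wedge^p A$ along orbit pieces of length approximately $n/2$, starting from points where the Oseledets estimates already hold with uniformly bounded waiting time. All four bounds in the claim then come from directly applying property (1) or (2) of Oseledets at those starting points, and absorbing the deviations of the lengths from $n/2$ into an extra $\delta$-budget using the choice of $\gamma$.

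More concretely, I will first check that the relevant orbit segments have length exceeding the local Oseledets waiting time. The point $y=f^{\ell}(x)$ was chosen so that $f^m(y)=f^{\ell+m}(x)\in B_q$, so $Q(f^{\ell+m}(x))\le q$; similarly, since $x\in C_q\setminus C_{q-1}$, we have $Q(x)\le q$. The condition $n\ge N_B(x)\ge 10Q(x)$ together with $|\ell/n-1/2|<\gamma\le 1/10$ gives $\ell-1\ge 2n/5-1$ and $n-\ell-m\ge 2n/5-m\ge n/5$ (using also $N_B(x)\ge m\gamma^{-1}$). For $n$ large enough both lengths exceed $q$, so the Oseledets bounds (1), (2) at $x$ and at $f^{\ell+m}(x)$ apply to $\mathcal{A}_0$ and $\mathcal{A}_1$ respectively. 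Since $V$ and $H$ are $\wedge^p A$-invariant and $\mathcal{A}_i$ is block diagonal, $\mathcal{A}_i^{++}$ is the restriction of $\mathcal{A}_i$ to $V$ and $\mathcal{A}_i^{--}$ its restriction to $H$, which is precisely what those Oseledets bounds control.

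Now I will write the length of each segment as $n/2+\eta_i$ with $|\eta_0|\le \gamma n+1$ and $|\eta_1|\le \gamma n+m$. Applying properties (1) and (2) of Oseledets,
\[
\log\|\mathcal{A}_i^{++}\|\le \Bigl(\tfrac{n}{2}+\eta_i\Bigr)(\lambda_1^{\wedge p}(A)+\delta),\qquad
\log\|\mathcal{A}_i^{--}\|\le \Bigl(\tfrac{n}{2}+\eta_i\Bigr)(\lambda_2^{\wedge p}(A)+\delta).
\]
Since $c$ strictly bounds $\log\|\wedge^p A\|$, each Lyapunov exponent $\lambda_j^{\wedge p}(A)$ satisfies $|\lambda_j^{\wedge p}(A)|\le c$, and $\gamma$ was chosen so that $\gamma c\le \delta$. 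Also $m\le\gamma n$ by $N_B(x)\ge m\gamma^{-1}$. Hence $|\eta_i|\le 2\gamma n$ (for large $n$), and
\[
|\eta_i|\,(|\lambda_j^{\wedge p}(A)|+\delta)\le 2\gamma n(c+\delta)\le 2n\delta+2n\delta\cdot\gamma\le 2n\delta,
\]
after absorbing $\gamma\delta$ into $\delta$ (shrinking $\gamma$ if necessary, which has already been built into the definition $\gamma\le 1/10$, $\gamma c\le \delta$). Adding to $(n/2)(\lambda_j^{\wedge p}(A)+\delta)=(n/2)\lambda_j^{\wedge p}(A)+n\delta/2$ yields at most $(n/2)\lambda_j^{\wedge p}(A)+5n\delta/2=(n/2)(\lambda_j^{\wedge p}(A)+5\delta)$, which is exactly the claimed bound.

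The essentially routine obstacle is just the careful bookkeeping of the deviation $\eta_i$: one must handle both signs of $\lambda_j^{\wedge p}(A)+\delta$ (so $|\eta_i|$ rather than $\eta_i$ enters), and one must ensure that the three sources of slack (the Oseledets error $\delta$, the length offset from $n/2$, and the rotation/hyperbolic gadget of length $m$) all fit inside the $5\delta$ allowance. This is why the definitions set $N_B(x)$ larger than both $10Q(x)$ and $m\gamma^{-1}$, and why $\gamma$ was chosen to satisfy $\gamma\le c^{-1}\delta$; once those choices are in hand, the calculation is immediate.
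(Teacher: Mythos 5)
Your approach matches the paper's: apply Oseledets estimates (1)--(2) at $x$ for $\mathcal{A}_0$ and at $f^{\ell+m}(x)$ for $\mathcal{A}_1$, verify the segment lengths exceed the Oseledets waiting times via $N_B(x)\ge 10\,Q(x)$ and $N_B(x)\ge m\gamma^{-1}$, and absorb length deviations with the choice $\gamma\le c^{-1}\delta$. That is exactly the paper's route, and you are also right that treating the case $\lambda_j^{\wedge p}(A)+\delta<0$ requires the two-sided bound on the segment lengths and the $|\eta_i|$ bookkeeping (a point the paper's own chain $\ell(\lambda+\delta)<n(\tfrac12+\gamma)(\lambda+\delta)$ silently assumes away).

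Two small slips. First, the displayed chain $2\gamma n(c+\delta)\le 2n\delta+2n\delta\gamma\le 2n\delta$ is not a valid inequality: the honest bound is $2n\delta(1+\gamma)\le 2.2\,n\delta$ (using $\gamma\le 1/10$), so your budget for $\mathcal{A}_1$ comes to $n\delta/2+2.2n\delta=2.7n\delta$, which overshoots the advertised $\tfrac12 n\cdot 5\delta=2.5n\delta$. This is harmless downstream (the paper ends by replacing $\delta$ with $\delta/8$, so any universal constant works), but as stated the ``$\le 2n\delta$'' line is false; the paper stays within $5\delta$ for $\mathcal{A}_1$ only because, when $\lambda+\delta>0$, it uses $n-\ell-m<n(\tfrac12+\gamma)$, letting the $-m$ help rather than paying for $m$ as a symmetric error through $|\eta_1|$. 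Second, the statement ``since $x\in C_q\setminus C_{q-1}$ we have $Q(x)\le q$'' is not correct: $C_q$ is a union of full orbits and $Q$ is not $f$-invariant, so membership in $C_q$ gives no pointwise bound on $Q(x)$. This does not damage your argument because the bound you actually use is $\ell>n/10\ge N_B(x)/10\ge Q(x)$, which is what the paper uses as well; just drop the claim about $Q(x)\le q$.
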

\begin{proof}[Proof of Claim 2]
Since $\ell > (\frac{1}{2}-\gamma)n > \frac{1}{10}n \geq  \frac{1}{10}N(x) \geq Q(x)$ we have that
\begin{gather*}
\log \| \mathcal{A}_0^{++} \| = \log \| \wedge^p(A^{\ell}(x))\restrict{V(x)}\| < \ell(\lambda_1^{\wedge p}(A) + \delta), \\
\log \| \mathcal{A}_0^{--} \| = \log \| \wedge^p (A^{\ell}(x))\restrict{H(x)}\| < \ell(\lambda_2^{\wedge p}(A) + \delta). 
\end{gather*}
Let $\lambda$ be either $\lambda_1^{\wedge p}(A)$ or $\lambda_2^{\wedge p}(A)$. Using $\gamma \lambda < \gamma c \leq \delta$ and $\gamma < 1$, we find
\[
\ell(\lambda + \delta) < n(\tfrac{1}{2} + \gamma)(\lambda + \delta) < n(\tfrac{1}{2}\lambda + \tfrac{1}{2}\delta + \delta + \delta) = \tfrac{1}{2}n(\lambda + 5\delta).
\]
This proves the case $i = 0$.

On the other hand, for $i = 1$ we have $n - \ell - m > n(\frac{1}{2}-\gamma) - n\gamma > \frac{1}{10}n \geq Q(x) \geq q$. Also $f^{\ell}(x) \in f^{-m}(B_q)$ and so $Q(f^{\ell + m}(x)) \leq q$. Therefore
\begin{gather*}
\log \| \mathcal{A}_1^{++} \| = \log \| \wedge^p(B^{n-\ell-m}(f^{\ell + m}x))\restrict{V(f^{\ell+m}x)}\| < (n-\ell-m)(\lambda_1^{\wedge p}(A) + \delta), \\
\log \| \mathcal{A}_1^{--} \| = \log \| \wedge^p(A^{n-\ell-m}(f^{\ell+m}x))\restrict{H(f^{\ell + m}x)}\| < (n-\ell-m)(\lambda_2^{\wedge p}(A) + \delta). 
\end{gather*}
As before, $(n-\ell-m)(\lambda + \delta) < n(\frac{1}{2} + \gamma)(\lambda + \delta) \leq \frac{1}{2}n(\lambda + 5\delta)$ and this concludes the proof of the claim.
\end{proof}

\begin{claim}
$\log \| \mathcal{L}\|_{\max} < 2n\delta.$
\end{claim}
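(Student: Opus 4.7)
The strategy is to control $\|\mathcal{L}\|_{\max}$ by first bounding the ordinary operator norm $\|\mathcal{L}\|$, whose logarithm grows only linearly in $m$, and then converting to the max-norm via Lemma~\ref{vinculanormas}(2) at the cost of the inverse sine of the angle between $V$ and $H$ at the endpoint $f^{\ell+m}(x)$. The point is that both contributions can be absorbed into separate $n\delta$ terms thanks to the constraints $m\leq n\gamma$ and $\gamma c\leq\delta$ already built into Part~1.

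For the operator norm, each perturbation factor satisfies $\|L_j\|\leq\|A(f^j y)\|+\e<M+1$, hence $\|\wedge^p L_j\|\leq(M+1)^p$, so
\[
\log\|\mathcal{L}\|\leq\sum_{j=0}^{m-1}\log\|\wedge^p L_j\|\leq pm\log(M+1)\leq mc,
\]
provided the strict upper bound $c$ for $\log\|\wedge^p A\|$ fixed in Part~1 was chosen large enough to also exceed $p\log(M+1)$. This is a harmless enlargement made once and for all at the beginning of the proof, and I would flag it as the only nontrivial piece of bookkeeping needed.

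Next I invoke Lemma~\ref{vinculanormas}(2) on the block decomposition~\eqref{paraestimar} of $\mathcal{L}$ with respect to the source splitting $V(y)\oplus H(y)$ and the target splitting $V(f^m y)\oplus H(f^m y)$. Writing $\theta=\measuredangle(V(f^{\ell+m}x),H(f^{\ell+m}x))$ this yields
\[
\log\|\mathcal{L}\|_{\max}\leq -\log\sin\theta+\log\|\mathcal{L}\|.
\]
Since $\ell+m\geq\ell\geq\tfrac{1}{10}n\geq Q(x)$, property~(3) of $Q(x)$, applied to the exterior power cocycle at time $\ell+m$, gives $\sin\theta>e^{-(\ell+m)\delta}$, so $-\log\sin\theta<(\ell+m)\delta\leq n\delta$.

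Combining both steps and using $m\leq n\gamma$ (from $N_B(x)\geq m\gamma^{-1}$) together with $\gamma c\leq\delta$,
\[
\log\|\mathcal{L}\|_{\max}<n\delta+mc\leq n\delta+n\gamma c\leq 2n\delta,
\]
as claimed. No genuine obstacle appears; the argument is a straightforward bookkeeping consequence of the calibration of $c$, $\gamma$ and $N_B$ carried out in Part~1.
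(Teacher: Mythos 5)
Your proof is correct and follows the paper's argument almost verbatim: bound $\log\|\mathcal{L}\|\leq mc\leq n\gamma c\leq n\delta$, bound $-\log\sin\measuredangle(V,H)$ at $f^{\ell+m}(x)$ by $n\delta$ via property (3) of $Q(x)$ using $\ell+m\geq Q(x)$, and combine via Lemma~\ref{vinculanormas}(2). The caveat you flag---that $c$ must actually bound $\log\|\wedge^p L_j\|$ for the perturbed factors rather than merely $\log\|\wedge^p A\|$---is a legitimate imprecision in the paper (which glosses over it with ``we may then assume that $\log\|L_j\|<c$''), and the once-and-for-all enlargement of $c$ you propose is exactly the right fix.
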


\begin{proof}[Proof of Claim 3]
Since $\{L_0,\ldots,L_{m-1}\}$ is an interchanging $\e-$sequence, each $L_j$ is close to the value of $A$ at some point. We may then asume that $\log \| L_j\| < c$. In particular, $\log \| \mathcal{L}\| < mc \leq nc\gamma \leq n\delta$. We have that $\ell + m \geq n(\frac{1}{2} - \gamma) \geq \frac{1}{10}n \geq Q(x)$. So $\log[1/\sin \measuredangle(V_{f^{\ell + m} x},H_{f^{\ell + m} x})] < \delta$ and, by Lemma~\ref{vinculanormas} part 2, $\log \| \mathcal{L}\|_{\max} < 2n\delta$.
\end{proof}

To conclude, we use the previous claims to bound each of the blocks in~\eqref{paraestimar}:
\begin{align*}
\log \| \mathcal{A}_1^{++}\mathcal{L}^{+-}\mathcal{A}_0^{--}\| &< \tfrac{1}{2}n(\lambda_1^{\wedge p}(A) + \lambda_2^{\wedge p}(A) + 14\delta), \\
\log \| \mathcal{A}_1^{--}\mathcal{L}^{-+}\mathcal{A}_0^{++}\| &< \tfrac{1}{2} n (\lambda_1^{\wedge p}(A) + \lambda_2^{\wedge p}(A) + 14 \delta), \\
\log \| \mathcal{A}_1^{--}\mathcal{L}^{--}\mathcal{A}_0^{--}\| &< \tfrac{1}{2} n (2\lambda_2^{\wedge p}(A) + 14 \delta).
\end{align*}
The last expression is smaller than the first two, it follows that
\[
\log \| \hat{L}_{n-1}\ldots\hat{L}_0 \|_{\max} < n\left(\frac{\lambda_1^{\wedge p}(A) + \lambda_2^{\wedge p}(A)}{2} + 7 \delta\right).
\]
Therefore, by the first inequality of Lemma~\ref{vinculanormas} and $\log[4/\sin \measuredangle(V(x),H(x))]< n\delta$,
\[
\log \| \hat{L}_{n-1}\ldots\hat{L}_0 \| < n\left(\frac{\lambda_1^{\wedge p}(A) + \lambda_2^{\wedge p}(A)}{2} + 8 \delta\right).
\]
Replacing $\delta$ by $\delta/8$ this concludes the proof of Lemma~\ref{estimacionnorma}.
\end{proof}

\section{Lowering the main Lyapunov exponent}

In the last section we established that under non \(m-\)domination for a large \(m\) we can find a sufficiently long interchanging \(\e-\)sequence that lowers the norm along the orbit of a non periodic point. Our aim is using these sequences to construct nearby cocycles such that the main Lyapunov exponent is lower.

The main ingredient is the following perturbation result. Recall that \(\mathcal{R}\) contains no periodic points.

\begin{prop} \label{mainperturbativo}
Let \(x \in \reg\), \(M > 0\), \(\e \in (0,1)\) and \(\delta>0\). Assume that the cocycle \(A \in \text{\textup{C}}(X,\mathcal{G})\) such that \(\|A\| < M\) is not \(m-\)dominated for some \(m > m_0(M,\e)\) and let $n \geq N_{A}(x)$.
Then there exists an open set \(U_x\) containing \(x\) such that for every open set \(V \subset U_x\) and any compact set \(K \subset V\), there exists a cocycle \(B \in \text{\textup{C}}(X,\mathcal{G})\) such that:
\begin{enumerate}
\item \(\| A - B \| < \e\).
\item \(B\) is constant within \(f^{i}(K)\), \(\forall j= 0,1 ,\ldots, n-1\).
\item \(B = A\) in \(X \setminus \bigcup_{i=0}^{n-1} f^i(V)\).
\item For every \(y \in K\) it holds that \[\frac{1}{n}\log \| \wedge^p B^n(y) \| < \frac{\Lambda_{p-1}(A) + \Lambda_{p+1}(A)}{2} + \delta. \]
\end{enumerate}
\end{prop}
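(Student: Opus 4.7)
The plan is to \emph{paste} the interchanging $\e$-sequence given by Lemma~\ref{estimacionnorma} directly into $A$ along the orbit segment of $x$, using a Urysohn-type bump interpolation, so that for $y \in K$ the product $B^n(y)$ literally equals that sequence product and property (4) follows from Lemma~\ref{estimacionnorma} with no further computation. Concretely, I would first apply Lemma~\ref{estimacionnorma} (with $\e$ replaced by $\e/2$, which only strengthens the hypothesis on $m$) to get, for every $n \geq N_A(x)$, an interchanging $(\e/2)$-sequence $\{\hat{L}_0^{(x,n)},\ldots,\hat{L}_{n-1}^{(x,n)}\}$ over $x$ of length $n$ satisfying
\[
\frac{1}{n}\log \|\wedge^p(\hat{L}_{n-1}^{(x,n)}\cdots \hat{L}_0^{(x,n)})\| \leq \frac{\Lambda_{p-1}(A)+\Lambda_{p+1}(A)}{2}+\delta.
\]

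Next I would construct the neighbourhood $U_x$. Since $x \in \reg$ is non-periodic (the set $\reg$ in force here contains no periodic points) and $f$ is a homeomorphism, the points $x, f(x), \ldots, f^{n-1}(x)$ are distinct; a standard continuity argument then furnishes an open neighbourhood $U_x$ of $x$ such that (a) the sets $U_x, f(U_x), \ldots, f^{n-1}(U_x)$ are pairwise disjoint, and (b) $\|A(y) - A(f^j(x))\| < \e/2$ for every $0 \le j \le n-1$ and every $y \in f^j(U_x)$, using uniform continuity of $A \circ f^j$ at $x$ for each of the finitely many $j$. Note that $U_x$ depends on $x$, $n$, $\e$ and $A$, but not on $V$ or $K$.

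Given $V \subset U_x$ open and $K \subset V$ compact, the closed sets $f^j(K)$ and $X \setminus f^j(V)$ are disjoint in the normal space $X$, so Urysohn's Lemma~\ref{Ury} produces continuous bump functions $h_j:X \to [0,1]$ with $h_j \equiv 1$ on $f^j(K)$ and $h_j \equiv 0$ off $f^j(V)$. I would define
\[
B(y) = h_j(y)\,\hat{L}_j^{(x,n)} + (1-h_j(y))\,A(y) \quad \text{for } y \in f^j(V),\ 0 \le j \le n-1,
\]
and $B(y) = A(y)$ otherwise. Pairwise disjointness of the $f^j(V)$ and the vanishing of $h_j$ off $f^j(V)$ make $B$ well-defined and continuous. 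Properties (2) and (3) are immediate from the definition. For (1), one estimates $\|B(y) - A(y)\| \le h_j(y)\|\hat{L}_j^{(x,n)} - A(y)\| \le \|\hat{L}_j^{(x,n)} - A(f^j(x))\| + \|A(f^j(x)) - A(y)\| < \e/2 + \e/2 = \e$. For (4), any $y \in K$ satisfies $f^j(y) \in f^j(K)$ for $0 \le j \le n-1$, where $h_j \equiv 1$, so $B(f^j(y)) = \hat{L}_j^{(x,n)}$ and consequently $B^n(y) = \hat{L}_{n-1}^{(x,n)}\cdots \hat{L}_0^{(x,n)}$, to which the bound from Lemma~\ref{estimacionnorma} applies verbatim.

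The main technical obstacle I anticipate is the $\mathcal{G}=\sld$ case: the naive affine interpolation need not preserve unit determinant. A natural remedy is to interpolate inside the group via the matrix exponential, $B(y) = A(y)\exp(h_j(y)\log(A(y)^{-1}\hat{L}_j^{(x,n)}))$, which is well-defined and takes values in $\sld$ provided $A(y)^{-1}\hat{L}_j^{(x,n)}$ is close enough to the identity (a condition secured by shrinking $U_x$ further in step (b)), and which continues to satisfy the same $C^0$ estimate after adjusting constants. Beyond this, the argument amounts to careful bookkeeping of $\e$'s, ensuring that $\|\hat{L}_j^{(x,n)} - A(f^j(x))\|$ and $\|A(f^j(x)) - A(y)\|$ combine to give exactly the bound required in (1).
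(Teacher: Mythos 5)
Your construction is essentially identical to the paper's: apply Lemma~\ref{estimacionnorma} with $\e/2$ to get the interchanging sequence, build $U_x$ from pairwise disjointness of the first $n$ iterates plus uniform continuity of $A$, interpolate between $A$ and the sequence using Urysohn bump functions, and read off properties (1)--(4). The only divergence is the $\sld$ patch: the paper invokes local contractibility of $\sld$ to produce a homotopy $\gamma$ landing in the balls $\textbf{B}(A(f^i(x)),\e/2)$, whereas you propose interpolating via $A(y)\exp\bigl(h_j(y)\log(A(y)^{-1}\hat{L}_j^{(x,n)})\bigr)$. Be aware that shrinking $U_x$ only controls $\|A(y)-A(f^j(x))\|$; it does not make $\|\hat{L}_j^{(x,n)}-A(f^j(x))\|$ (which can be as large as $\e/2$) small, so $A(y)^{-1}\hat{L}_j^{(x,n)}$ need not lie in the domain where the principal matrix logarithm is defined when $\e$ is not small and $\|A^{-1}\|$ is large. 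The paper's abstract contractibility argument sidesteps this; if you keep the exponential interpolation, you should instead argue that the relevant ball in $\sld$ is contained in a chart where $\log$ is a diffeomorphism, rather than appeal to shrinking $U_x$.
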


\begin{proof} 
Since $x$ is a nonperiodic point, there exists an open set $U$ containing $x$ such that $f^i(U)\cap f^j(U) = \emptyset$ for every $i \neq j$ and $i, j \in \{0,\ldots,n-1\}$.
Let \(\delta_{A,u}\) be given by the uniform continuity of the cocycle $A$ for $\e/2$.
Let $U_x$ be the connected component of $U \cap \left( \bigcap_{i=0}^{n-1} f^{-i}(B(f^i(x),\delta_{A,u})) \right)$ that contains the point $x$. 

Let $V \subset U_x$ be an open set and let $K \subset V$ be a compact set. Since $K$ and $\partial V$ are compact sets it holds that $\rho = d(K, \partial V)$ is a positive number. We consider the following set
\[
V_0 = \left\{ y \in V,\ d(y, K) < \rho / 2\right\}.
\]
The set $V_0$ is open and it holds that $K \subset V_0 \subset V$. Since $f$ is a homeomorphism we have that $\hat{V} = \bigcup_{i=0}^{n-1}f^i(V_0)$ is an open set and $\hat{K} = \bigcup_{i=0}^{n-1}f^i(K)$ is a compact set.

By Urysohn's Lemma~\ref{Ury}, there exists a continuos function $h:X\rightarrow \R$ such that $h\restrict{\hat{V}^c} \equiv 0$ and $h\restrict{\hat{K}} \equiv 1$.  By virtue of Lemma~\ref{estimacionnorma}, using $\delta$ and $\e/2$ we can find an interchanging $\e/2$-sequence of length $n$ over $x$ namely $\{ \hat{L}_i^{(x.n)}\}_{i=0}^{n-1}$.

To define the cocycle \(B\) we consider two different cases. First, consider \(\mathcal{G} = \gld\).
We can define the cocycle $B \in \text{\textup{C}}(X,\mathcal{G})$ as:
\[
B(y) = \begin{cases}A(y) & \text{ if } y \in X \setminus \bigcup_{i=0}^{n-1}f^i(U) \\
\left(1-h(y)\right) A(y)  + h(y)  \hat{L}_i^{(x,n)} & \text{ if } y \in f^i(U),\ i \in \{0,\ldots,n-1\}.\end{cases}
\]
By construction $B$ satisfies items 2 to 4. 

In order to prove our proposition we need to calculate $\|A-B\|$. It follows from the definition of the cocycle $B$ that given $y \in X$, if $y \in X \setminus \hat{V}$ then
\[
\|A(y) - B(y) \| = 0.
\]
Otherwise, for $i \in \{0,\ldots,n-1\}$ and $y \in f^i(U) \cap \hat{V}$,
\begin{align*}
\|A(y) - B(y)\| &= h(y)\|\hat{L}_i^{(x,n)} - A(y) \| \leq \|\hat{L}_i^{(x,n)} - A(y) \|  
\\ &\leq \|\hat{L}_i^{(x,n)} - A(f^i(x)) \| + \|A(f^i(x)) - A(y)\| < \e
\end{align*}
where the last inequality follows from the fact that the distance $d(y,f^i(x)) < \delta_{A,u}$ and this shows the proposition in the first case.

For the case \(\mathcal{G} = \sld\) we proceed similarly. Note that \(\sld\) is not convex so the construction for \(B\) does not work. We fix this as follows. 

Denote \(\textbf{B}(A(f^i(x)),\varepsilon/2)\subset \sld\) the ball of radius \(\e/2\) and center \(A(f^i(x))\) for \(i \in \{0,\ldots n-1\}\). Since \(\sld\) is locally contractible, we can find a continuous function \(\gamma \in \text{\textup{Cont}}(\overline{\hat{V}}\times [0,1], \cup_{i=0}^{n-1}\textbf{B}(A(f^i(x)),\varepsilon/2))\) in the closure of \(\hat{V} \times [0,1]\). Such that
\[ \gamma(y,0) = A(y), \quad \gamma(y,1) = \hat{L}_i^{(x,n)},\ \text{\textup{for }} y \in f^i(V_0).\]

We proceed to define the cocycle \(B \in \text{\textup{C}}(X,\mathcal{G})$ as:
\[
B(y) = \begin{cases}A(y) & \text{ if } y \in X \setminus \overline{\hat{V}} \\
\gamma(y,h(y)) & \text{ if } y \in \overline{\hat{V}}.\end{cases}
\]

Similarly to the \(\gld\) case, \(B\) satisfies items 2 to 4 by construction. The calculation of \(\|A-B\|\) is similar as well, from the definition of the cocycle $B$ that given $y \in X$, if $y \in X \setminus \hat{V}$ then
\[
\|A(y) - B(y) \| = 0.
\]

Otherwise, for $i \in \{0,\ldots,n-1\}$ and $y \in f^i(U) \cap \hat{V}$,
\begin{align*}
\|A(y) - B(y)\| &= \|\gamma(y,h(y)) - A(y) \| 
\\ &\leq \|\gamma(y,h(y)) - A(f^i(x)) \| + \|A(f^i(x)) - A(y)\| < \e
\end{align*}
where the last inequality follows, again, from the fact that the distance $d(y,f^i(x)) < \delta_{A,u}$ showing the proposition for the second case.
\end{proof}

The current objective is to use perturbations as in last proposition to obtain a cocycle with a lower Lyapunov exponent. The folowing construction will be useful.
\begin{defi}
Let \( f \in \Hom[\mu]{X}\). A \emph{tower} is a pair of measurable sets  \( (T,T_{b}) \) such that there is a positive integer \(n \), called the \emph{height} of the tower, such that the sets \(T_b,f(T_b),\ldots, f^{n-1}(T_b)\) are pairwise disjoint and their union is \(T\). The set \(T_b\) is called the \emph{base} of the tower. We call the sets \(f^{i}(T_b)\) the i-th \emph{floor} of the tower.

A \emph{castle} is a pair of measurable sets \( (Q, Q_b) \) such that there exists a finite or countable family of pairwise disjoint towers \( (T^i, T^i_b) \) such that \(Q = \bigcup_{i} T^i \) and \(Q_b = \bigcup_i T_b^i\). We call \(Q_b\) the \emph{base} of the castle.
\end{defi} 

Given a positive measure set \( G \subset X \), consider the return time map \(\tau : G \rightarrow \N\) defined by \(\tau(x) = \min\{ n \geq 1; f^n(x) \in G\}. \) Denoting \(G_n = \tau^{-1}(n) \), then \( (T_n, G_n)\) is a tower, where \(T_n = G_n \cup f(G_n) \cup \ldots \cup f^{n-1}(G_n).\) Consider que castle \((Q,G)\), with base \( G, \) given by the union of the towers \(T_n\). This is called the \emph{Kakutani-Rokhlin castle} with base \(G\). Note that \(Q = \bigcup_{n \in \Z}f^n(G) \) modulo a zero measure set. In particular it holds that \(Q\) is an invariant set. 

The following Lemma assures that if we have a measurable set such that its boundary measures zero, then we can assume up to a zero measure set that its Kakutani-Rokhlin castle has towers with open bases by substituting \(G_i\) with \(\inte{G_i}\).

\begin{lema}\label{baseabs}
Let \(G\) be a measurable set such that \(\mu(\partial G) = 0\). Consider the Kakutani-Rokhlin castle \((Q,G)\) and its towers \((T_i,G_i)\). Then it holds that \( \mu(\partial G_i) = 0\) for every \(i \in \N\). 
\end{lema}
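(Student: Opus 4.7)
The plan is to write $G_i$ as a finite intersection/difference of preimages of $G$ and exploit that $f$ is a homeomorphism preserving $\mu$, so that the boundary of each factor has zero measure.

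First I would use the definition of the return time map to rewrite
\[
G_i = G \cap f^{-i}(G) \cap \bigcap_{j=1}^{i-1} f^{-j}(X \setminus G),
\]
which is a finite intersection of sets of the form $f^{-j}(G)$ and $f^{-j}(X\setminus G)$ for $0 \leq j \leq i$. Then I would invoke two elementary topological facts: first, for any subsets $A,B \subset X$ one has $\partial(A \cap B) \subset \partial A \cup \partial B$, which by induction gives $\partial(\bigcap_j A_j) \subset \bigcup_j \partial A_j$; second, because $f$ is a homeomorphism the map $f^{-j}$ is a homeomorphism, so $\partial(f^{-j}(G)) = f^{-j}(\partial G)$, and similarly $\partial(f^{-j}(X \setminus G)) = f^{-j}(\partial G)$ since $\partial(X\setminus G) = \partial G$.

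Combining these two observations yields the inclusion
\[
\partial G_i \;\subset\; \bigcup_{j=0}^{i} f^{-j}(\partial G).
\]
Since $\mu$ is $f$-invariant and $\mu(\partial G) = 0$ by hypothesis, each set $f^{-j}(\partial G)$ has zero measure, and by subadditivity $\mu(\partial G_i) \leq (i+1)\,\mu(\partial G) = 0$, which is the claim.

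I do not expect a genuine obstacle here: the lemma is essentially a bookkeeping statement. The only subtlety is to remember that $\partial$ behaves correctly under preimages by a homeomorphism (it would fail for a merely measurable transformation), and that taking a finite intersection can only enlarge the boundary inside the union of the individual boundaries, so no problem arises from the fact that the index $i$ could be arbitrarily large (each fixed $i$ gives only finitely many terms).
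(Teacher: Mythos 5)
Your proof is correct, and it is essentially the same idea as the paper's, but presented in a cleaner, non-inductive form. The paper proves the lemma by induction on $n$, using the recursion between $G_n$ and $G_{n+1}$ together with the same two observations you invoke: that $\partial(A\cap B) \subset \partial A \cup \partial B$ and that boundaries pull back correctly under $f^{-j}$ because $f$ is a homeomorphism (plus $f$-invariance of $\mu$). You instead bypass the induction by writing the closed-form identity
\[
G_i \;=\; G \cap f^{-i}(G) \cap \bigcap_{j=1}^{i-1} f^{-j}(X\setminus G),
\]
which is exactly $\tau^{-1}(i)$, and then apply the two topological facts once. This is a genuine (if modest) simplification: the recursion the paper uses, $G_{n+1} = \bigl(G \cap f^{-(n+1)}(G)\bigr)\setminus G_n$, is in fact slightly off as stated --- one should subtract the union $\bigcup_{j\le n} G_j$ rather than $G_n$ alone for the identity to hold --- whereas your explicit formula for $G_i$ is immediate from the definition of the return time and leaves no such bookkeeping to go wrong. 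Both arguments yield $\partial G_i \subset \bigcup_{j=0}^{i} f^{-j}(\partial G)$ and conclude by subadditivity and $f$-invariance.
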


\begin{proof}
We will show this by induction in the basis of the Kakutani-Rokhlin towers. Note that, by definition, it holds that \(G_1 = G \cap f^{-1}(G)\). It follows that
\[
\partial G_1 = \left(\partial G \cap f^{-1}(G)\right) \cup \left(G \cap f^{-1}(\partial G)\right).\]
By hypothesis \(\mu(\partial G) = 0\), due to \(\mu\) being an \(f-\)invariant measure it follows that \(\mu(f^{-1}(\partial G)) = 0\). Thus \(\mu (\partial G_1) = 0\).

Now, suppose the conclusion of the Lemma is true for \(G_n\). Note that the set \(G_{n+1} = \left(G \cap f^{-(n+1)}(G)\right) \setminus G_n\). Then
\[
\partial G_{n+1} = \partial\left(G \cap f^{-(n+1)}(G)\right) \setminus G_n \cup \left(\partial G_n \cap G \cap f^{-(n+1)}G\right).
\]
We conclude that \(\mu(\partial(G \cap f^{-(n+1)}(G))) = 0\) analogously as the \(\partial G_1\) case. And \( \mu( \partial G_n \cap G \cap f^{-(n+1)}G) = 0\) since \(\mu (\partial G_n) = 0\). It follows that \(\mu (\partial G_{n+1}) = 0\) which concludes the proof.
\end{proof}

Our main tool for creating sets with zero-measure boundary are open balls. This is justified by the following remark. 

\begin{obs}\label{bolas} Given any point \(x \in X\) there are uncountably infinitely many radii of balls \(B(x,r)\) such that \(\mu(\partial B(x,r)) = 0\). 
\end{obs}
This follows immediately from the fact that the set \( \mathcal{S} = \{\partial B(x,r) \ :  0 < r < 1\}\) is an uncountable set and it is composed of disjoint sets. As the measure \(\mu\) is finite it follows that \(\mu(\partial B(x,r)) = 0\) except, possibly, in countably many radii. In what follows we will use this fact to construct sets with zero-measure boundaries.

We are ready to globalize the perturbation in the last proposition in order to get a cocycle with a lower first Lyapunov exponent.

\begin{teo}\label{bajaexponente}
Let \(\e \in (0,1)\), \(\delta >0\) and \(M>0\). Let \(A \in \text{\textup{C}}(X,\mathcal{G})\) such that \(\|A\| < M\) and assume that the cocycle \(A\) is not \(m-\)dominated for some \(m \geq m_0(\|A\|,\e)\). Then there exists a cocycle \(B \in \text{\textup{C}}(X,\mathcal{G})\) such that:
\begin{enumerate}
\item \( \| A - B \| < \e, \)
\item \( \Lambda_p (B) < \frac{\Lambda_{p-1}(A) + \Lambda_{p+1} (A)}{2}+ \delta.\)
\end{enumerate}
\end{teo}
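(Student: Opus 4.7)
The plan is to globalize Proposition~\ref{mainperturbativo} by distributing local perturbations over a Kakutani--Rokhlin castle that covers most of $X$, and then to bound $\Lambda_p(B)$ using the Furstenberg--Kesten identity $\Lambda_p(B)=\inf_N \tfrac{1}{N}\int\log\|\wedge^p B^N\|\,d\mu$ for the exterior cocycle $\wedge^p B$. Thus I only need to produce a single large $N$ for which this average sits below $\tfrac{\Lambda_{p-1}(A)+\Lambda_{p+1}(A)}{2}+\delta$.

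First, fix an auxiliary $\eta>0$ (to be shrunk at the end) and set $\delta'=\delta/4$. Apply Lemma~\ref{estimacionnorma} to $A$ with parameters $(M,\varepsilon/2,\delta')$ to obtain $N_A:\reg\to\mathbb{N}$, and choose $N_0$ so that $G^*:=\{N_A\leq N_0\}$ has $\mu(G^*)>1-\eta$. Pick a nonperiodic $z\in\supp(\mu)$; by Remark~\ref{bolas} select $r>0$ so that $W=B(z,r)$ has $\mu(\partial W)=0$ and $N_0\mu(W)<\eta$. Form the Kakutani--Rokhlin castle $(Q,W)$ with towers $(T_n,G_n)$ where $G_n=\tau^{-1}(n)$; by Lemma~\ref{baseabs} each base has $\mu(\partial G_n)=0$, so the interiors carry full mass. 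The short towers ($n<N_0$) contribute total mass at most $N_0\mu(W)<\eta$. For each tall tower with $\mu(G_n\cap G^*)>0$, pick $x_n\in\mathrm{int}(G_n)\cap G^*$ (so $N_A(x_n)\leq N_0\leq n$), apply Proposition~\ref{mainperturbativo} at $x_n$ with length $n$ to get an open neighborhood $U_{x_n}$, and inside $U_{x_n}\cap\mathrm{int}(G_n)$ choose an open $V_n$ and a compact $K_n\subset V_n$ with $\mu(K_n)\geq(1-\eta)\mu(G_n\cap G^*)$. Since $\sum_n n\mu(G_n)=1$, truncating to finitely many tall towers $(n_1,\dots,n_L)$ is enough to guarantee that the perturbed sub-towers $\bigcup_j\bigcup_{i=0}^{n_j-1}f^i(K_{n_j})$ have total measure at least $1-3\eta$. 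The sub-towers $\bigcup_i f^i(V_{n_j})$ lie inside the pairwise disjoint $T_{n_j}$, so the local perturbations combine without interference into a single continuous cocycle $B$ with $\|A-B\|<\varepsilon$.

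For the estimate, decompose the orbit of $\mu$-a.e.\ $y\in X$ into successive return trips to $W$. A trip of length $n$ starting at $f^{t}(y)\in K_{n_j}$ (a ``good'' trip) contributes at most $n\bigl(\tfrac{\Lambda_{p-1}(A)+\Lambda_{p+1}(A)}{2}+\delta'\bigr)$ to $\log\|\wedge^p B^N(y)\|$ by Proposition~\ref{mainperturbativo}(4), while any ``bad'' trip contributes at most $nc$ with $c=p\log(\|A\|+\varepsilon)$. Birkhoff's Theorem~\ref{birkhoff} applied to the indicator of the complement of $\bigcup_j\bigcup_{i=0}^{n_j-1}f^i(K_{n_j})$ shows that for $\mu$-a.e.\ $y$ and all large $N$, the total length of bad iterates is at most $4\eta N$. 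Submultiplicativity of the exterior norm and integration then yield
\[
\tfrac{1}{N}\int\log\|\wedge^p B^N\|\,d\mu \leq (1-4\eta)\left(\tfrac{\Lambda_{p-1}(A)+\Lambda_{p+1}(A)}{2}+\delta'\right)+4\eta c + o(1).
\]
Choosing $\eta$ small enough that the error term is below $\delta/2$ and passing to $\inf_N$ gives the conclusion. The principal obstacle is choreographing countably many local perturbations so that they glue into a single \emph{continuous} cocycle of $C^0$-distance under $\varepsilon$: I resolve this by truncating to finitely many tall towers and relying on the open-interior property of the bases supplied by Remark~\ref{bolas} and Lemma~\ref{baseabs}.
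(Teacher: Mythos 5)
Your overall strategy matches the paper's: globalize Proposition~\ref{mainperturbativo} over a Kakutani--Rokhlin castle, and the idea of bounding $\Lambda_p(B)$ via the subadditive Furstenberg--Kesten infimum $\inf_N \tfrac{1}{N}\int\log\|\wedge^p B^N\|\,d\mu$ rather than the pointwise Birkhoff limit used in the paper is a legitimate, slightly cleaner alternative. However, the construction step of your proposal contains a gap that the paper's argument is specifically engineered to avoid, plus a second omission in the measure bookkeeping.

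The main gap is the coverage of each tower base. You pick a single point $x_n\in\mathrm{int}(G_n)\cap G^*$, invoke Proposition~\ref{mainperturbativo} to get an open set $U_{x_n}$, and then assert the existence of a compact $K_n\subset U_{x_n}\cap\mathrm{int}(G_n)$ with $\mu(K_n)\geq(1-\eta)\mu(G_n\cap G^*)$. But the proposition provides no lower bound on the size of $U_{x_n}$: it is the connected component through $x_n$ of $U\cap\bigcap_{i=0}^{n-1}f^{-i}\bigl(B(f^i(x_n),\delta_{A,u})\bigr)$, which can have measure far smaller than that of $G_n$. Hence $\mu(U_{x_n}\cap\mathrm{int}(G_n))$ may be well below $(1-\eta)\mu(G_n\cap G^*)$ and no such $K_n$ exists. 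The paper's proof circumvents this by covering $\overline{G_i}$ with finitely many balls $B(x,r_x)$, each contained in the neighborhood supplied by the proposition, and then refining the base into disjoint pieces $G_{i,j}$, each small enough to fit in a single ball; a separate perturbation is placed on each $G_{i,j}$. Without this subdivision your cocycle $B$ only improves the $\wedge^p$-norm on a possibly negligible fraction of each tower, and the final measure count does not close.

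The secondary gap is that a tall tower whose base $G_n$ is disjoint from $G^*$ receives no perturbation in your scheme, and you implicitly treat its whole tower as small. But $\mu((G^*)^c)<\eta$ only bounds the measure of the bases, not of the towers above them; such towers can carry total measure close to $1$. The paper handles this by locating, for each piece $G_{i,j}$, the lowest floor $f^{k(i,j)}(G_{i,j})$ meeting $\mathcal{N}_{N_0}$, placing the perturbation from that floor upward, and charging the unperturbed lower floors to $(\mathcal{N}_{N_0})^c$ (which works because those floors are, by definition of $k(i,j)$, subsets of $(\mathcal{N}_{N_0})^c$, together with the minimum tower height $10N_0$). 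To repair your argument you need both the base partitioning and this first-good-floor device.
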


\begin{proof}
Fix $\e>0$ and \( \delta >0. \) In order to prove the theorem, we will make many perturbations as in Proposition~\ref{mainperturbativo} within a large set. For the sake of clarity we divide the proof in two parts.
\smallskip

\noindent
\emph{Part 1: Construction of the cocycle \(B\)}.
\medskip

First we note that, as \(\|A\| < M\), then \(\|\wedge^p A\| < M^p\). Therefore the norm of cocycles in \(\mathcal{U}(\wedge^p A,\e)\) is bounded by \(c = M^p + \e\). In what follows we denote
\[
\e_1 = \frac{\delta}{6 \log c}.
\]

For every positive integer $N$ we consider the set \[ \mathcal{N}_N =\{ y \in \reg\ \text{such that } N_{A}(y) \leq N \}. \] These sets form a non decreasing sequence and up to a zero measure set their union is the set \(X\). We take \(N_0 \in \N\) such that \[ \mu(\mathcal{N}_{N_0}) \geq 1-\frac{9}{10}\e_1.\]

Let \( x \in \reg\), as it is a nonperiodic point there exists an open set \(G\) containing \(x\) such that \( f^i(G) \cap f^j(G) = \emptyset \text{ for } i \neq j,\) and \( i,j \in \{0,\ldots,10 N_0-1\} \). Note that we can assume that the boundary \(\partial G\) is a zero measure set by taking \(G\) from open balls as in Remark~\ref{bolas} if necessary.

We consider the Kakutani-Rhoklin castle of basis \(G\). From Lemma~\ref{baseabs} it follows that we can assume the bases \(\{G_i\}_{i=N_0}^{\infty}\)of the Kakutani-Rohklin towers to be open sets with a zero measure boundary.

Since the measure \(\mu\) is a probability measure, it follows that there exists \(m\) such that the towers \( \{(T_i,G_i)\}_{i=m}^{\infty}\) hold that \(\mu ( \bigcup_{i=m}^{\infty} T_i) < \e_1\).

We aim to partition each remaining tower in smaller subtowers of convenient size.
To do this fix \(i \in \{N_0, \ldots, m-1\}\) and let \(\delta_{A,u}\) be given by the uniform continuity of the cocycle \(A\) for \(\e/2\). For every \(x \in G_i\) consider \(V_x\) the connected component containing \(x\) of \(\cap_{j=0}^i f^{-j}(B(f^j(x),\delta_{A,u}))\). Since \(V_x\) is open, we can choose a ball \(B(x,r_x)\subset V_x\) such that \(\mu(\partial B(x,r_x)) =0 \) as in Remark~\ref{bolas}.

As the balls \(B(x,r_x)\) cover \(\overline{G_i}\), there exists a finite subcover \(B_1, \ldots, B_{n(i)}\). We denote \(G_{i,1} = B_1 \cap G_i\). And inductively we define
\[
G_{i,j} = \left(B_j \setminus \cup_{l=1}^{j-1} G_{i, l}\right) \cap G_i
\]
for \(j \in \{ 2, \ldots, n(i)\}\). This creates a partition of the tower \( (T_i,G_i)\) in smaller disjoint towers \( (T_{i,j},G_{i,j})\) with the same number of floors. Note that, by construction, \(\mu (\partial G_{i,j}) = 0.\)

For every tower \(T_{i,j}\) consider \[k(i,j) = \min \{ k\in \{0,\ldots i-N_0\},\text{ such that } f^k(G_{i,j}) \cap \mathcal{N}_{N_0} \neq \emptyset \},\]
note that the towers where \(k(i,j)\) is not well defined are either entirely contained in \((\mathcal{N}_{N_0})^c\) or the intersection is in the last \(N_0\) floors. We denote the set where \(k(i,j)\) is not well defined and \((\mathcal{N}_{N_0})^c\) by \(\mathcal{K}\).

Note that, if \(x \in f^{k(i,j)}(G_{i,j}) \cap \mathcal{N}_{N_0}\), from Proposition~\ref{mainperturbativo} we get an open set \(U_x\). The open set  \( f^{k(i,j)}(G_{i,j})\) is a subset of\(U_x\). This follows from the construction of \(G_{i,j}\) which mimics the construction of \(U_x\) but uses higher towers, making it a smaller set. Recall that \(\mu(f^{k(i,j)}(G_{i,j})) = \mu(G_{i,j})\) thus, we denote by \(K_{i,j}\) a compact subset of \(f^{k(i,j)}(G_{i,j})\) such that
\[
\mu(K_{i,j}) \geq \left(1-\e_1\right) \mu(G_{i,j}).
\]

Using Proposition~\ref{mainperturbativo} we construct cocycles \(B_{i,j}\) using \(\e\), \(\delta/2\), and setting the number \(n = i - k(i,j)\), the open sets \(f^{k(i,j)}(G_{i,j})\) and the compact sets \(K_{i,j}\). We denote \(\hat{T}_{i,j} = T_{i,j} \setminus \cup_{k=0}^{k(i,j)-1}f^k (G_{i,j})\). As sketched by figure~\ref{figuri}

\begin{figure}[bt]
\centering
\definecolor{qqqqff}{rgb}{0,0,1}
\definecolor{zzttqq}{rgb}{0.6,0.2,0}
\begin{tikzpicture}[line cap=round,line join=round,>=triangle 45,x=1cm,y=1cm,scale=0.6]
\clip(1.8,-1.106043833688065) rectangle (12.2,11.3);
\fill[line width=2pt,color=zzttqq,fill=zzttqq,fill opacity=0.10000000149011612] (4,10) -- (4,3) -- (6,3) -- (6,10) -- cycle;
\fill[line width=2pt,color=zzttqq,fill=zzttqq,fill opacity=0.10000000149011612] (8,10) -- (8,0) -- (10,0) -- (10,10) -- cycle;
\draw [line width=2pt] (1.8,0)-- (4,0);
\draw [line width=2pt] (4,0)-- (10,0);
\draw [line width=2pt] (10,0)-- (12.2,0);
\draw [line width=2pt] (10,0)-- (10,10);
\draw [line width=2pt] (10,10)-- (4,10);
\draw [line width=2pt] (4,10)-- (4,0);
\draw [line width=2pt] (4,9)-- (2,9);
\draw [line width=2pt] (2,0)-- (2,9);
\draw (2.3,6) node[anchor=north west] {$T_{i-1}$};
\draw [line width=2pt] (10,10)-- (10,11);
\draw [line width=2pt] (12,11)-- (12,0);
\draw (10.2,7) node[anchor=north west] {$T_{i+1}$};
\draw [line width=2pt] (10,0)-- (11,0);
\draw [line width=2pt] (10,10)-- (4,10);
\draw [line width=2pt] (10,11)-- (12,11);
\draw [line width=1pt] (6,0)-- (6,10);
\draw [line width=1pt] (8,0)-- (8,10);
\draw (4.3,-0.25) node[anchor=north west] {$G_{i,1}$};
\draw (6.3,-0.25) node[anchor=north west] {$G_{i,2}$};
\draw (8.3,-0.25) node[anchor=north west] {$G_{i,3}$};
\draw (6.5,10.1) node[anchor=south west] {$T_i$};
\draw [line width=1pt,color=zzttqq] (4,3)-- (6,3);
\draw [line width=3pt,color=qqqqff] (4.2,3)-- (5.8,3);
\fill[line width=1pt,color=qqqqff,fill=qqqqff,fill opacity=0.10000000149011612] (4.2,10) -- (4.2,3) -- (5.8,3) -- (5.8,10) -- cycle;
\draw [line width=3pt,color=qqqqff] (8.3,0)-- (9.7,0);
\fill[line width=1pt,color=qqqqff,fill=qqqqff,fill opacity=0.10000000149011612] (8.3,10) -- (8.3,0) -- (9.7,0) -- (9.7,10) -- cycle;
\draw (4.3,6) node[anchor=north west] {$\hat{T}_{i,1}$};
\draw (8.3,6) node[anchor=north west] {$\hat{T}_{i,3}$};
\begin{scriptsize}
\draw[color=qqqqff] (5,2.5) node {$K_{i,1}$};
\draw[color=qqqqff] (9,0.35) node {$K_{i,3}$};
\end{scriptsize}
\end{tikzpicture}

\caption{A sketch of the construction within the tower \(T_i\). The base of the tower \(G_i\) is partitioned by \(G_{i,1},\ G_{i,2},\ G_{i,3}\). Only the compact sets \(K_{i,1}\) and \(K_{i,3}\) are defined. 
}\label{figuri}
\end{figure}
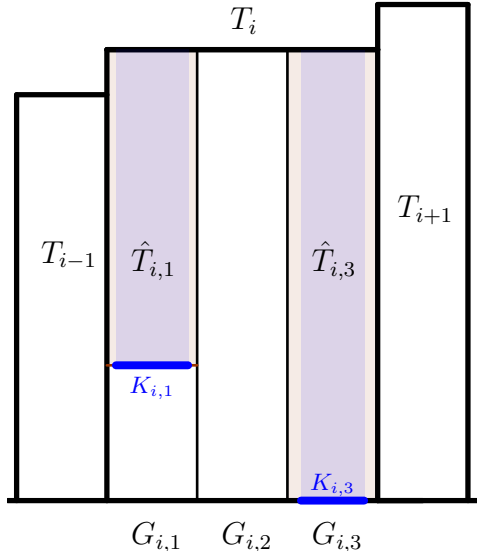

We define the cocycle \(B\) by parts as follows. Let \(y \in X\),
\[
B(y) = \begin{cases}
B_{i,j}(y) & \text{ if } y \in \hat{T}_{i,j} \\
A(y) & \text{ everywhere else.}\end{cases}
\]
From the definition it follows that \( \| A - B \| < \e.\)

\medskip

\noindent
\emph{Part 2: Estimation of the sum of the \(p\) first Lyapunov exponents \(\Lambda_p(B)\).}
\medskip

Recall that \(\Lambda_p(B) = \lambda_1^{\wedge p}(B)\), that is, the main Lyapunov exponent of the Exterior Cocycle \(\wedge^p B\). In order to estimate such main Lyapunov exponent, let \[y \in K = \bigcup_{i=1}^m \bigcup_{j=1}^{n(i)} K_{i,j}.\] Since all \(K_{i,j}\) are disjoint, let \(n(y) = i - k(i,j)\) that is, \(n(y)\) is the height of the tower \(\hat{T}_{i,j}\). Let \(\tau_1\) be the first return time of \(y\) to \(K\).

Denoting by \(y_2 = f^{\tau_1}(y)\), we denote analogously by \(n(y_2) = i - k(i,j)\) such that \(y_2 \in K_{i,j}\), the height of the tower \(\hat{T}_{i,j}\), and \(\tau_2\) the first return time to \(K\). Denote then by \(y_3 = f^{\tau_2}(y_2)\) and so on. For consistency in notation denote \(y\) by \(y_1\).

Fix \(N \in \N\), we can write \(N = \tau_1 + \tau_2 + \ldots + \tau_k + \hat{N}\), for some \( k \in \N\) such that \(\hat{N} = N - \sum_{i=1}^k \tau_i\) is lower than \(\tau_{k+1}\). It holds that, for the exterior cocycle \(\wedge^p (B),\)
\begin{multline*}
\wedge^p(B^N)(y) = \wedge^p(B^{\hat{N}})(y_k)  \wedge^p (B^{\tau_k-n(y_k)})(f^{n(y_k)}(y_k)) \wedge^p(B^{n(y_k)})(y_k)\ldots \\ \ldots \wedge^p(B^{\tau_1-n(y_1)})(f^{n(y_1)}(y_1))\wedge^p(B^{n(y_1)})(y_1).
\end{multline*}
In particular, it follows that the norm for \(\wedge^p B^N(y)\) holds that
\[
\|\wedge^p B^N(y)\| \leq \|\wedge^p B^{\hat{N}}(y_k)\| \prod_{i=1}^k \| \wedge^p B^{\tau_i-n(y_i)}(f^{n(y_i)}(y_i))\| \| \wedge^p B^{n(y_i)}(y_i)\|.
\]
Recall that \(c\) is an upper bound of the norm of cocycles in \(\mathcal{U}(\wedge^p A,\e)\), then if \(n_N = \hat{N}+ \sum_i^k (\tau_i - n(y_i))\) it follows that
\[
\|\wedge^p B^N(y)\| \leq \|\wedge^p B^{n(y_1)}(y_1)\| \|\wedge^p B^{n(y_2)}(y_2) \| \ldots \|\wedge^p B^{n(y_k)}(y_k) \| c^{n_N}.
\]

Now we calculate,
\[
\frac{1}{N} \log \|\wedge^p B^j(y)\| \leq \frac{1}{N}\left( \log \|\wedge^p B^{n(y_1)}(y_1)\| + \ldots +\log \|\wedge^p B^{n(y_k)}(y_k)\|\right) + \frac{n_N}{N} \log c.
\]

From Proposition~\ref{mainperturbativo} it follows that, for \(i \in \{1,\ldots,k\}\)
\[
\frac{1}{n(y_i)} \log \|\wedge^p B^{n(y_i)}(y_i)\| \leq \frac{\Lambda_{p-1}(A) + \Lambda_{p+1}(A)}{2} + \frac{\delta}{2}.
\]
thus
\begin{align*}
\mathrlap{\!\!\!\!\!\!\!\! \frac{1}{N} \log \|\wedge^p B^N(y)\|} & \\
& \leq \frac{1}{N}\left( \log \|\wedge^p B^{n(y_1)}(y_1)\| + \ldots \log \|\wedge^p B^{n(y_k)}(y_k)\|\right) + \frac{n_N}{N} \log c \\
& \leq \frac{1}{N}\left( \frac{n(y_1)}{n(y_1)}\log \|\wedge^p B^{n(y_1)}(y_1)\| + \ldots \frac{n(y_k)}{n(y_k)}\log \|\wedge^p B^{n(y_k)}(y_k)\|\right) + \frac{n_N}{N} \log c \\
& \leq \frac{\sum_{i=1}^k n(y_i)}{N} \left( \frac{\Lambda_{p-1}(A) + \Lambda_{p+1}(A)}{2} + \frac{\delta}{2} \right) + \frac{n_N}{N} \log c \\
& \leq \frac{\Lambda_{p-1}(A) + \Lambda_{p+1}(A)}{2} + \frac{\delta}{2} + \frac{n_N}{N} \log c
\end{align*}

Since \( \mu ( K) > 0\) it follows that almost every \( y \in K\) is a regular point for the cocyle \(B\), thus the limit \( \lim_{N\rightarrow \infty} \frac{1}{N} \log \|\wedge^p B^N(y)\|\) exists for \(\mu-\)almost every \(y\) and it holds that
\[
\Lambda_1(B) = \lim_{N \rightarrow \infty} \frac{1}{N} \log \|\wedge^p B^N(y)\| \leq \frac{\Lambda_{p-1}(A) + \Lambda_{p+1}(A)}{2} + \frac{\delta}{2} + \lim_{N \rightarrow \infty} \frac{n_N}{N} \log c.
\]
The last limit is the mean soujourn time of \(y\) at the complement of the towers with base \(K_{i,j}\) within the castle \(Q\). Since \(\mu\) is ergodic it follows from Birkhoff's Theorem~\ref{birkhoff} that this limit is equal to the measure of the complement of these towers. 

For the purpose of calculating this measure, recall that we can divide this complement in three not necessarily disjoint sets. The first one is the towers of the castle \((Q,G_i)\) with at least \(m\) floors, that is \(\cup_{i=m}^{\infty}T_i\). Recall that \(m\) was chosen such that \(\mu(\cup_{i=m}^{\infty}T_i) < \e_1\).

The second one is the set \(\mathcal{K}\). Recall that it contains the towers of the castle such that all except the last \(N_0\) floors are entirely contained in the complement of \(\mathcal{N}_{N_0}\), recall that the minimum height of a tower is \(10N_0\) floors. It follows that the measure is \(10/9 \mu((\mathcal{N}_{N_0})^c < \e_1\) since \(\mu(\mathcal{N}_{N_0}) > 1 - 9/10 \e_1.\)

The third set is 
\[\mathcal{T} = \bigcup_{i=N_0}^{m-1}\bigcup_{j=1}^{n(i)}\bigcup_{\ell = 0}^{n(i)-k(i,j)} f^{\ell}\left(f^{k(i,j)}G_{i,j})\setminus K_{i,j}\right)
\]
recall the relation \(\mu(K_{i,j}) \geq \left(1-\e_1\right) \mu(G_{i,j})\). This implies that the measure of \(\mathcal{T}\) is lower than \(\e_1\).

We conclude from Birkhoff Theorem that:
\[
\lim_{N \rightarrow \infty} \frac{n_N}{N}\leq \mu\left(\mathcal{K} \cup \mathcal{T}\cup (\cup_{i=m}^{\infty}T_i)\right) <  3\e_1 = \frac{\delta}{2 \log c}.
\]
Thus we deduce that
\[
\Lambda_p(B) \leq \frac{\Lambda_{p-1}(A) + \Lambda_{p+1}(A)}{2} + \delta,
\]
the theorem now follows.
\end{proof}

\section{Proof of Main Theorem}

Before proving the main theorem, we say that \(\alpha\) is the \emph{limit inferior} of the \(p-\)sum of the first Lyapunov exponents as the cocycle \(B\) tends to \(A\) when
\[
\alpha = \liminf_{B \rightarrow A} \Lambda_p(B) = \sup_{\e \rightarrow 0} \left(\inf \left\{ \Lambda_p(B):\ 0 < \| B - A \| < \e\right\}\right).
\]

It is also worthy of note that, when a cocycle \(A\) has no dominated splitting of index \(p\) then, for any sequence of cocycles \(A_n \rightarrow A\), it holds that either \(A_n\) posseses no dominated splitting of index \(p\) or such dominated splitting is \emph{weak} (i.e: the \(m\) for the \(m-\)domination gets arbitrarily large), in particular, it is greater than \(m_0(\|A\|+1,\e)\) for any small \(\e\). This is a simple consequence of the continuity of \(m-\)dominated splittings. See \cite{BonattiDiazViana} for further details.

\begin{proof}[Proof of main theorem]
Suppose the cocycle \(A\) holds the uniform \(p-\)gap property and it also has no dominated splitting of index \(p\). From the uniform gap property it holds that there exists a neighbourhood \(\mathcal{U}\) of \(A\) and \(\beta > 0\) such that for every \(B \in \mathcal{U}\) then \(\lambda_{p}(B) -\lambda_{p+1}(B) > \beta\).

Let \(\alpha = \liminf_{B \rightarrow A}\Lambda_p(B).\) We can assume, by taking a smaller neighbourhood if necessary, that every cocycle \(B\) in \(\mathcal{U}\) holds that \(\Lambda_p(B) > \alpha - \beta/4\). Let \(\e \in (0,1)\) be such that the \(C^0\) ball \(\text{\textup{B}}(A,\e)\) is entirely contained in \(\mathcal{U}\).

Let \(A'\) such that \(\|A - A'\| < \e/2\). By the observation above \(A'\) is not \(m-\)dominated for some \(m \geq m_0(\|A\|+1,\e/2)\), \(m_0\) defined as in Proposition~\ref{lemaperturbativo}. \(A'\) can be assumed to hold that \(\Lambda_p(A') \in [\alpha - \beta/5, \alpha + \beta/5]\).

From Theorem~\ref{bajaexponente}, taking \(\e/2\) and \(\delta < \beta/20\), there exists a cocycle \(B\) such that \(\|B-A'\| < \e /2\) and \(\Lambda_p(B) < \Lambda_p(A') - \beta/2 + \delta\). The right side of the inequation is true since we can rewrite
\begin{align*}
\frac{\Lambda_{p-1}(A') + \Lambda_{p+1}(A')}{2} &= \Lambda_{p-1}(A') + \frac{\lambda_p(A') + \lambda_{p+1}(A')}{2} = \Lambda_{p}(A') + \frac{\lambda_{p+1}(A')-\lambda_p(A')}{2} \\
&< \Lambda_p(A') - \frac{\beta}{2}.
\end{align*}

The cocycle \(B\) is in the neighbourhood \(\mathcal{U}\) thus it follows that
\[
\alpha + (\beta/5 - \beta/2) + \delta > \Lambda_p(B) > \alpha - \beta/4
\]
which is a contradiction. The theorem now follows.
\end{proof}

We can now give a proof of the corollary~\ref{mncoro}.

\begin{proof}[Proof of Corollary~\ref{mncoro}]:
Let \(A \in  \text{\textup{C}}(X,\text{\textup{SL}}(3,\mathbb{R}))\) such that \(A\) is a continuity point of the second Lyapunov exponent. 

If \(A\) is a point of continuity of the first Lyapunov exponent then the whole Oseledet splitting is dominated. Note that a splitting of index 1 implies the whole splitting is dominated as well. So now we suppose there is no dominated splittings.

To show \(\lambda_2(A) = 0\) suppose that \(\lambda_2(A) < 0\). Write \(\e_0 = -\lambda_2(A)/2\), from continuity we can find \(C^0\) neighbourhood \(\mathcal{U}\) small enough we can assume every cocycle \(B \in \mathcal{U}\) holds that \(\lambda_2(B) < \lambda_2(A) + \e_0 <0\). 

As \(A\) is not dominated, it follows from Main theorem that we can find in \(\mathcal{U}\) a cocycle \(B\) such that \(|\lambda_1(B) - \lambda_2(B)| < \e_0\). This implies \(\lambda_1(B) <0\) and thus \(\lambda_1(B) + \lambda_2(B) + \lambda_3(B) < 0\) which contradicts \(B \in \text{\textup{SL}}(3,\mathbb{R})\).

We can make an analogous contradiction if we suppose \(\lambda_2(A) > 0\). This finishes the proof.
\end{proof}

\section{Some questions}

In this section we pose some questions that arise after the proof of the theorem. The first question is about consequences of our Main Theorem. Suppose a cocycle \(A \in \text{\textup{C}}(X,\mathcal{G})\) is such that the Lyapunov spectrum hold \(\lambda_p(A) > \lambda_{p+1}(A)\) and the splitting \(E \oplus F\) of index \(p\) given by the Oseledet splitting is not dominated. It follows from our Main Theorem that for every \(C^0-\)neighbourhood \(\mathcal{U}\) of \(A\) and given any \(\e > 0\) we can find a cocycle \(B \in \mathcal{U}\) such that \(0\leq \lambda_{p}(B)-\lambda_{p+1}(B)<\e\), that is, we can get the exponents arbitrarily close. 
\begin{quesnum}
Does there always exist a cocycle \(B \in \mathcal{U}\) such that \(\lambda_p(B) = \lambda_{p+1}(B)\)?
\end{quesnum} 

Note that we have the \(C^0\)-generic dichotomy of either the Oseledets splitting being dominated or every Lyapunov exponents being equal. However, this fact alone is not enough to answer this question. Some more work may be needed to reach a conclusion.

The following question is about relaxing the hypothesis on the measure \(\mu\), which we supose to be ergodic. 
\begin{quesnum}
Is the Main Theorem valid if the measure \(\mu\) is not ergodic?
\end{quesnum}
This question evidently requires some adjustments on the definition of uniform \(p-\)gap property among the Lyapunov Exponents since they are no longer constant. We could conjecture the uniform gap has to exist on average, that is, it should exist among the integrated Lyapunov exponents. As in given a cocycle \(A\) there exists a \(C^0-\)neighbourhood \(\mathcal{U}\) and \(\beta >0\) such that for every cocycle \(B \in \mathcal{U}\) it holds that
\[
\int_X \left[\lambda_p(B,x) - \lambda_{p+1}(B,x)\right]d\mu(x) > \beta.
\]

However, note that this question \emph{as is} seems to have a negative answer\footnote{Thanks to Mauricio Poletti for pointing this out and providing the sketch of the argument that is reproduced here.}. A sketch of the argument follows: let \(X\) and \(f\) be as in the Main Theorem. Suppose there exists \(\Lambda_1\) and \(\Lambda_2\) two invariant sets such that there exists ergodic borel regular probability measures \(\mu_i\) supported on respective \(\Lambda_i\). We construct a cocycle \(A \in \gld\) such that \(A\restrict{\Lambda_1}\) has a uniform \(p\)-gap property and \(A\restrict{\Lambda_2}\) is such that \(\lambda_p(A\restrict{\Lambda_2}) = \lambda_{p+1}(A\restrict{\Lambda_2})\) and admits no dominated splitting of index \(p\).

Consider the measure \(\mu = \frac{1}{2}\mu_1 + \frac{1}{2}\mu_2\). This is a non-ergodic \(f-\)invariant borel regular probability measure. Note that the cocycle \(A\) holds the uniform \(p-\)gap property as defined above. However, the cocycle \(A\) does not admit a dominated splitting of index \(p\) on \(\supp(\mu)\).

A reformulation of Question 2 that may be of interest is the following.

\begin{ques2rev}
Under the hypotheses of the Main Theorem, but the measure \(\mu\) is not ergodic, does there exist an invariant set \(\Lambda \subset X\) such that \(\mu(\Lambda)>0\) and \(A\) admits a dominated splitting of index \(p\)?
\end{ques2rev}

Another question is the validity of main theorem on cocycles taking values on other matrices groups.

\begin{quesnum}
Is the Main Theorem valid on cocycles taking values on other subgroups of \(\gld\)?
\end{quesnum}

In the proof of Proposition~\ref{lemaperturbativo} we used the fact of cocycle taking values in either in \(\gld\) or \(\sld\) (actually, that we could make perturbations by either hyperbolic special matrices or rotations). This restricts all the constructions afterwards. So this question deals with making the conclusion of the Proposition true on other groups of matrices.

In~\cite{BochiViana_2005} the authors find conditions for the existence of space interchanging sequences for a fixed cocycle. This cocycle takes values on any submanifold of \(\gld\) that fulfills an accesibility condition. This condition is fulfilled by any subgroup of \(\gld\) that acts transitively on \(\P\R^d\). One way of answering our question is verifying if the conclusions of Proposition~\ref{lemaperturbativo} are true in cocycles taking values on manifolds that satisfy such accessibility condition.   

As a final question let \(M\) be a Riemannian manifold, \(\mu\) the induced volume measure and \(f \in \Diff[\mu]{X}\) be a volume preserving diffeomorphism. As in question 2, we say that its Lyapunov spectrum has the \emph{uniform \(p-\)gap property} if there exists a \(C^1\)-neighbourhood \(\mathcal{U}_f\) and \(\beta>0\) such that for every \(g \in \mathcal{U}_f\) it holds that
\[
\int_M [\lambda_p(g,x) - \lambda_{p+1}(g,x)]d\mu(x) > \beta.
\]
We have an analogous statement for our Main theorem in this context.
\begin{quesnum}
Does the uniform p-gap property imply the existence of a dominated splitting for \(f\)?
\end{quesnum}
As a derived question, restrict the previous question to ergodic diffeomorphisms.

\bibliographystyle{amsplain}
\bibliography{bibliografia}

\end{document}